\newtheorem{theorem}{Theorem}[section]
\newtheorem{definition}[theorem]{Definition}
\newtheorem{lemma}[theorem]{Lemma}
\newtheorem{corollary}[theorem]{Corollary}
\newtheorem{conjecture}[theorem]{Conjecture}
\theoremstyle{definition}
\newtheorem{remark}[theorem]{Remark}
\newcommand{\turan}{Tur\'{a}n }
\title{Some extremal results on hypergraph Tur\'{a}n problems}
\author{Zixiang Xu$^{\text{a,}}$,\thanks{e-mail: zxxu8023@qq.com}~ Tao Zhang$^{\text{b,}}$\thanks{e-mail: zhant220@163.com. Research supported by the National Natural Science Foundation of China under Grant No. 11801109.}~ and  Gennian Ge$^{\text{a,}}$\thanks{e-mail: gnge@zju.edu.cn. Research supported by the National Natural Science Foundation of China under Grant No. 11971325, National Key Research and Development Program of China under Grant Nos. 2020YFA0712100 and 2018YFA0704703, and Beijing Scholars Program.}\\
\footnotesize $^{\text{a}}$ School of Mathematical Sciences, Capital Normal University, Beijing 100048, China.\\
\footnotesize $^{\text{b}}$ School of Mathematics and Information Science, Guangzhou University, Guangzhou 510006, China.\\}
\begin{document}

\date{}

\maketitle

\begin{abstract}
For two $r$-graphs $\mathcal{T}$ and $\mathcal{H}$, let $\text{ex}_{r}(n,\mathcal{T},\mathcal{H})$ be the maximum number of copies of $\mathcal{T}$ in an $n$-vertex $\mathcal{H}$-free $r$-graph. The determination of Tur\'{a}n number $\text{ex}_{r}(n,\mathcal{T},\mathcal{H})$ has become the fundamental core problem in extremal graph theory ever since the pioneering work Tur\'{a}n's Theorem was published in $1941$. Although we have some rich results for the simple graph case, only sporadic results have been known for the hypergraph Tur\'{a}n problems. In this paper, we mainly focus on the function $\textup{ex}_{r}(n,\mathcal{T},\mathcal{H})$ when $\mathcal{H}$ is one of two different hypergraph extensions of the complete bipartite graph $K_{s,t}$. The first extension is the complete bipartite $r$-graph $K_{s,t}^{(r)}$, which was introduced by Mubayi and Verstra\"{e}te~[J.
Combin. Theory Ser. A, 106: 237--253, 2004]. Using the powerful random algebraic method, we show that if $s$ is sufficiently larger than $t$, then
\[\text{ex}_{r}(n,\mathcal{T},K_{s,t}^{(r)})=\Omega(n^{v-\frac{e}{t}}),\]
where $\mathcal{T}$ is an $r$-graph with $v$ vertices and $e$ edges. In particular, when $\mathcal{T}$ is an edge or some specified complete bipartite $r$-graph, we can determine their asymptotics.
 The second important extension is the complete $r$-partite $r$-graph $K_{s_{1},s_{2},\ldots,s_{r}}^{(r)}$, which has been widely studied. When $r=3$, we provide an explicit construction giving
\[\text{ex}_{3}(n,K_{2,2,7}^{(3)})\geqslant\frac{1}{27}n^{\frac{19}{7}}+o(n^{\frac{19}{7}}).\]
Our construction is based on the Norm graph, and improves the lower bound $\Omega(n^{\frac{73}{27}})$ obtained by probabilistic method.

\medskip
\noindent {{\it Key words and phrases\/}: Hypergraph Tur\'{a}n problem, random algebraic construction.}

\smallskip

\noindent {{\it AMS subject classifications\/}: 05C35, 05C65.}
\end{abstract}

\section{Introduction}
In this paper, an $r$-graph is always an $r$-uniform hypergraph. Let $\mathcal{H}$ be an $r$-graph. An $r$-graph $\mathcal{G}$ is called $\mathcal{H}$-free if $\mathcal{G}$ contains no copy of $\mathcal{H}$ as a subhypergraph. Define $\text{ex}_{r}(n,\mathcal{T},\mathcal{H})$ to be the maximum number of copies of $\mathcal{T}$ in an $n$-vertex $\mathcal{H}$-free $r$-graph. In particular, if $\mathcal{T}$ is a single edge, then $\text{ex}_{r}(n,\mathcal{T},\mathcal{H})$ is equivalent to the classical Tur\'{a}n number $\text{ex}_{r}(n,\mathcal{H}).$ Moreover, when $r=2$, we usually use $\text{ex}(n,T,H)$ rather than $\text{ex}_{2}(n,T,H).$

The study of Tur\'{a}n numbers plays an important role in extremal graph theory. One of the oldest results on Tur\'{a}n numbers, which states that every graph on $n$ vertices with more than $\frac{n^{2}}{4}$ edges contains a triangle, was proved by Mantel~\cite{1907Mantel} in $1907$. This result was generalized later to $K_{\ell}$-free graphs by Tur\'{a}n~\cite{1941Turan}. Furthermore, the Erd\H{o}s-Stone-Simonovits theorem~\cite{1966ES, 1946ErodsBAMS} is an asymptotic version of a generalization of Tur\'{a}n's theorem, which gives the bound for the number of edges in an $H$-free graph, where $H$ is a non-complete graph. Bollob\'{a}s~\cite{1998BollobasGTM} described the Erd\H{o}s-Stone-Simonovits theorem as the ``fundamental theorem of extremal graph theory''. The determination of the exact asymptotics for $\text{ex}(n,H)$ is far from being solved when $H$ is a bipartite graph. One of the important cases is the complete bipartite graph $K_{s,t}$. A well-known result of K\"{o}vari, S\'{o}s and Tur\'{a}n~\cite{Kovari1954} showed that $\text{ex}(n,K_{s,t})=O(n^{2-\frac{1}{s}})$ for any integers $t\geqslant s$. Erd\H{o}s, R\'{e}nyi and S\'{o}s~\cite{Erdos1966} and Brown \cite{Brown1966} respectively proved matching lower bounds for the cases $s=2$ and $s=3$. For general values of $s$ and $t$, Koll\'{a}r, R\'{o}nyai and Szab\'{o}~\cite{KRS96} first showed that $\text{ex}(n,K_{s,t})=\Omega(n^{2-\frac{1}{s}})$ when $t\geqslant s!+1.$ The bound on $t$ was improved to $t\geqslant (s-1)!+1$ by Alon, R\'{o}nyai and Szab\'{o}~\cite{ARS99}. Based on some ideas in~\cite{BukhIJM2013}, Bukh~\cite{Bukh2015RAC} gave a new construction of $K_{s,t}$-free graphs which also yields a matched lower bound $\text{ex}(n,K_{s,t})=\Omega(n^{2-\frac{1}{s}}),$ where $t$ is sufficiently larger than $s$.

In contrast to the simple graph case, there are only a few results for the hypergraph Tur\'{a}n problems. For example, even the asymptotic value of $\text{ex}_{r}(n,K_{t}^{(r)})$ is still unknown for any $t>r\geqslant 3.$ In addition to complete $r$-graphs, some other cases were studied recently. Let $K_{s_{1},s_{2},\ldots,s_{r}}^{(r)}$ be a complete $r$-partite $r$-graph with parts of sizes $s_{1},s_{2},\ldots,s_{r}$, Mubayi~\cite{Mubayi2002} conjectured that $\text{ex}_{r}(n,K_{s_{1},s_{2},\ldots,s_{r}}^{(r)})=\Theta(n^{r-\frac{1}{\prod_{i=1}^{r-1}s_{i}}})$, where $s_{1}\leqslant s_{2}\leqslant\dots\leqslant s_{r}$. In the same paper, the author verified this conjecture when $s_{1}=s_{2}=\cdots=s_{r-2}=1$ and (i) $s_{r-1}=2, s_{r}\geqslant 2,$ (ii) $s_{r-1}=s_{r}=3,$ (iii) $s_{r-1}\geqslant 3, s_{r}>(s_{r-1}-1)!.$ Using the random algebraic method, Ma, Yuan and Zhang~\cite{Ma2018} showed that if $s_{r}$ is sufficiently larger than $s_{1},s_{2},\ldots,s_{r-1},$ then this conjecture is true.

For the function $\text{ex}_{r}(n,\mathcal{T},\mathcal{H})$, where $\mathcal{T}$ is not an edge, there are only sporadic results. When $r=2$, it corresponds to the classical generalized Tur\'{a}n number $\text{ex}(n,T,H)$, where $H$ and $T$ are graphs. In~\cite{Alon2016}, Alon and Shikhelman studied $\text{ex}(n,T,H)$ systematically and obtained many results on certain graphs such as complete graphs, complete bipartite graphs and trees. Later, Ma, Yuan and Zhang~\cite{Ma2018} improved some of their results. They showed that for any positive integers $a<s,$ $b\leqslant s$ and $t\geqslant f(a,b,s)$, $\text{ex}(n,K_{a,b},K_{s,t})=\Theta(n^{a+b-\frac{ab}{s}}).$  In the same paper, they also provided some bounds for $\text{ex}_{r}(n,\mathcal{T},K_{s_{1},s_{2},\ldots,s_{r-1},s_{r}}^{(r)})$ under certain conditions. For more extremal results of graphs and hypergraphs, we refer the readers to the surveys \cite{Furedi1991, Furedi2013, Keevash2015}.

In $2004$, Mubayi and Verstra\"{e}te \cite{Mubayi2004} considered a hypergraph extension of the complete bipartite graph. In this paper, we call this extension a complete bipartite $r$-graph for simplicity.
\begin{definition}[Complete bipartite $r$-graph]
  Let $X_{1},X_{2},\ldots,X_{t}$ be $t$ pairwise disjoint sets of size $r-1$, and let $Y$ be a set of $s$ elements, disjoint from $\bigcup\limits_{i\in [t]}X_{i}$. Then $K_{s,t}^{(r)}$ denotes the complete bipartite $r$-graph with vertex set $(\bigcup\limits_{i\in [t]}X_{i})\cup Y$ and edge set $\{X_{i}\cup\{y\}:i\in [t],y\in Y\}$.
\end{definition}
In \cite{Mubayi2004}, Mubayi and Verstra\"{e}te showed some bounds for $\text{ex}_{r}(n,K_{s,t}^{(r)})$ when $s\leqslant t$.
They showed $\text{ex}_{3}(n,K_{2,t}^{(3)})=\Theta(n^{2})$, and if $\frac{n}{3}\geqslant t \geqslant s\geqslant 3$, then $\text{ex}_{3}(n,K_{s,t}^{(3)})=O(n^{3-\frac{1}{s}})$. They also gave a construction which yields $\text{ex}_{3}(n,K_{s,t}^{(3)})=\Omega(n^{3-\frac{2}{s}})$ for $t>(s-1)!>0.$ In \cite{JiangJCTA2020}, Ergemlidze, Jiang and Methuku determined the expression $g(t)=\lim\limits_{n\rightarrow \infty}\frac{\textup{ex}_{3}(n,K_{2,t}^{(3)})}{\binom{n}{2}}=\Theta(t^{1+o(1)})$ as $t \rightarrow \infty $.

Note that $K_{s,t}^{(r)}$ and $K_{t,s}^{(r)}$ are nonisomorphic when $r\geqslant 3$ and $s\neq t$. The authors in~\cite{Mubayi2004} remarked that their results apply to both cases, so for simplicity they let $t\geqslant s.$  In this paper, we focus on the other case $s>t$ and $r\geqslant 3.$

Our first result gives a lower bound for $\text{ex}_{r}(n,\mathcal{T},K_{s,t}^{(r)})$ shown in the following theorem, where $\mathcal{T}$ is an arbitrary $r$-graph.

\begin{theorem}\label{thm:GTLB}
 Let $r\geqslant 3.$ For any positive integer $t$, and any $r$-graph $\mathcal{T}$ with $v$ vertices and $e$ edges, there exists some constant $c$ which depends on $r$ and $t$ such that if $s\geqslant c,$ then we have
  \begin{equation*}
    \textup{ex}_{r}(n,\mathcal{T},K_{s,t}^{(r)})=\Omega(n^{v-\frac{e}{t}}).
  \end{equation*}
\end{theorem}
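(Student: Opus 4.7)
The plan is to apply the random algebraic method in the spirit of Bukh \cite{Bukh2015RAC} and of Ma, Yuan, and Zhang \cite{Ma2018}, extending the edge-counting construction to the generalized \turan setting. Let $q$ be a prime power with $q^{t}\leq n<2q^{t}$; take the vertex set $V=\mathbb{F}_{q}^{t}$, and let $\mathcal{P}$ be the space of polynomials in $r$ blocks of $t$ variables over $\mathbb{F}_{q}$ of bounded degree $d$, symmetric under permutations of the $r$ blocks; here $d$ will be chosen sufficiently large in terms of $r$, $t$, and $\mathcal{T}$. Sample $f\in\mathcal{P}$ uniformly and form the random $r$-graph $\mathcal{G}_{f}$ on $V$ whose edges are the $r$-sets $\{v_{1},\ldots,v_{r}\}$ with $f(v_{1},\ldots,v_{r})=0$.

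The first task is to establish $\mathbb{E}[\#\mathcal{T}\text{-copies in }\mathcal{G}_{f}]=\Omega(n^{v-e/t})$. For an injection $\phi\colon V(\mathcal{T})\to V$, being an embedding of $\mathcal{T}$ into $\mathcal{G}_{f}$ amounts to the simultaneous vanishing of the $e$ evaluations $\{f(\phi(E_{j}))=0\}_{j\in[e]}$. A standard Schwartz--Zippel type argument shows that, for $d$ large enough, these $e$ functionals on $\mathcal{P}$ are linearly independent for all but $O(n^{v-1})$ ``degenerate'' injections, so on the generic part the joint vanishing probability equals $q^{-e}$. Summing gives $\Omega(n^{v}q^{-e})=\Omega(n^{v-e/t})$.

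The second task is to rule out copies of $K_{s,t}^{(r)}$. For each family $X_{1},\ldots,X_{t}\subseteq V$ of pairwise disjoint $(r-1)$-sets, the common neighborhood
\[
W(X_{1},\ldots,X_{t})=\{\,y\in V : f(X_{i}\cup\{y\})=0\ \text{for all}\ i\in[t]\,\}
\]
is the common zero set in $\mathbb{F}_{q}^{t}$ of $t$ polynomials in $y$ of degree $\leq d$. A random algebraic transversality lemma of the type used in \cite{Bukh2015RAC, Ma2018} shows that for $d$ sufficiently large in terms of $r,t$, with probability bounded away from $0$, the set $W(X_{1},\ldots,X_{t})$ is $0$-dimensional simultaneously for every admissible tuple $(X_{1},\ldots,X_{t})$. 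B\'ezout's theorem then forces $|W(X_{1},\ldots,X_{t})|\leq d^{t}$, and setting $c:=d^{t}+1$ guarantees that $\mathcal{G}_{f}$ is $K_{s,t}^{(r)}$-free whenever $s\geq c$. Combining with the first task, some deterministic $f$ produces a $K_{s,t}^{(r)}$-free $r$-graph with $\Omega(n^{v-e/t})$ copies of $\mathcal{T}$, as required.

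The principal obstacle is the transversality lemma underpinning the second task: one must show that the ``bad'' event that some $(X_{1},\ldots,X_{t})$ yields a positive-dimensional $W(X_{1},\ldots,X_{t})$ occurs with probability $o(n^{-t(r-1)})$ per fixed tuple, so that a union bound over the $O(n^{t(r-1)})$ candidate tuples of $(r-1)$-sets survives. This reduces to showing that the subspace of $f\in\mathcal{P}$ making a prescribed common zero set positive-dimensional has codimension growing with $d$, which is the technical core of the random algebraic method. A secondary but minor concern is that the $O(n^{v-1})$ degenerate embeddings in the first task contribute only a lower-order correction and can be discarded.
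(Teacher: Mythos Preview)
Your proposal follows the same random algebraic template as the paper, but there are two points where your sketch diverges from what the argument actually needs and from what the paper does.

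\textbf{First task.} You invoke a Schwartz--Zippel argument and speak of ``degenerate'' injections contributing a lower-order error. In fact no such case distinction is necessary: the key lemma (Lemma~\ref{lemma:LB2} in the paper, from \cite{Ma2018}) says that for \emph{every} set $U$ of $r$-tuples with $|U|\le td$, the vanishing events are exactly $q^{-|U|}$-probable. So the expected number of $\mathcal{T}$-copies is simply $\binom{q^t}{v}q^{-e}=\Omega(n^{v-e/t})$, with no exceptional injections.

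\textbf{Second task.} Here your outline departs more substantively from the paper. You propose a pure union bound: show that each tuple $(X_1,\dots,X_t)$ is ``bad'' (i.e.\ $W$ positive-dimensional) with probability $o(n^{-t(r-1)})$, then sum over $O(n^{t(r-1)})$ tuples. You describe this as a ``codimension'' estimate for the bad locus in $\mathcal{P}$. That is not how the bound is actually obtained. The mechanism used in the paper (and in \cite{Bukh2015RAC,Ma2018}) is:
(i) compute the moment $\mathbb{E}[|W|^{d}]=O(1)$ by expanding $|W|^d$ as a sum over ordered $d$-tuples of extensions and applying Lemma~\ref{lemma:LB2};
(ii) invoke the Lang--Weil dichotomy (Lemma~\ref{lemma:LB1}): either $|W|<c$ or $|W|\ge q/10$;
(iii) apply Markov to get $\mathbb{P}[|W|\ge c]=O(q^{-d})$.
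Your ``codimension growing with $d$'' phrasing does not by itself yield this probability bound, and the moment computation is the genuine technical core you omit.

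\textbf{Deletion versus union bound.} Granting the $O(q^{-d})$ bound, your union-bound route would work provided you take $d>(r-1)t^{2}$. The paper instead takes the specific value $d=(r-1)t^{2}-t+e+1$ and uses a \emph{deletion} step: the expected number of bad tuples is $O(q^{t-e-1})$, one removes a vertex from each, and since each vertex lies in $O(n^{v-1})$ copies of $\mathcal{T}$, only $o(n^{v-e/t})$ copies are destroyed. Either route is valid; the paper's choice makes the dependence of $c$ on $r,t,\mathcal{T}$ slightly smaller, while your route avoids the deletion bookkeeping at the cost of a larger $d$ and hence a larger threshold $c$.
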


To obtain the lower bound in Theorem~\ref{thm:GTLB}, our construction of $K_{s,t}^{(r)}$-free $r$-graphs is based on the random algebraic method which was introduced by Bukh~\cite{Bukh2015RAC}. Using the random algebraic method, Bukh and Conlon \cite{Bukh2018} verified the rational exponent conjecture which was presented in \cite{Erdos1981}. In recent years, the applications of the random algebraic method to various extremal problems have appeared in several papers~\cite{Conlon2014, Matthew2019, Ma2018}.

 We next show the following upper bound of the classical Tur\'{a}n number $\text{ex}_{r}(n,K_{s,t}^{(r)})$ for $r\geqslant 3$ and $s\geqslant t\geqslant 2$, which is a generalization of the result of Mubayi and Verstra\"{e}te~\cite[Theorem 1.4]{Mubayi2004}.

\begin{theorem}\label{thm:CTUB}
  Let $s\geqslant t\geqslant 2$. Then
  \begin{equation*}
    \textup{ex}_{r}(n,K_{s,t}^{(r)})=O(n^{r-\frac{1}{t}}).
  \end{equation*}
\end{theorem}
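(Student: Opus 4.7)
The plan is to adapt Mubayi--Verstra\"{e}te's double-count, but with the roles switched so that the smaller parameter $t$ governs the exponent. Let $\mathcal{G}$ be an $n$-vertex $K_{s,t}^{(r)}$-free $r$-graph with $N$ edges; for each vertex $v$ write $L_v = \{X \in \binom{V(\mathcal{G}) \setminus \{v\}}{r-1} : X \cup \{v\} \in E(\mathcal{G})\}$ for its link (an $(r-1)$-uniform hypergraph with $d_v$ edges), and $N_t(L_v)$ for the number of unordered $t$-matchings of $L_v$. I would establish the two estimates $\sum_v N_t(L_v) = O(n^{t(r-1)})$ and $\sum_v N_t(L_v) = \Omega(N^t/n^{t-1})$, which combine to yield $N = O(n^{r-1/t})$.

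For the upper estimate, interchange the order of summation:
\[
\sum_v N_t(L_v) \;=\; \sum_{M} \Bigl|\bigcap_{X \in M} N(X)\Bigr|,
\]
where $M$ ranges over $t$-matchings of $(r-1)$-sets on $V(\mathcal{G})$ and $N(X) = \{u : X \cup \{u\} \in E(\mathcal{G})\}$. The $K_{s,t}^{(r)}$-free hypothesis forces $|\bigcap_{X \in M} N(X) \setminus \bigcup_{X \in M} X| < s$ for every $M$, for otherwise the $t$ sets in $M$ together with $s$ common external neighbors would form a copy of $K_{s,t}^{(r)}$. Therefore $|\bigcap_{X \in M} N(X)| < s + t(r-1) = O(1)$, and multiplying by the trivial bound $O(n^{t(r-1)})$ on the number of $t$-matchings gives the upper estimate.

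For the lower estimate I would use a greedy matching argument on each $L_v$. Every vertex of $L_v$ lies in at most $\binom{n-2}{r-2}$ edges of $L_v$, since this local degree equals the pair-codegree of $\mathcal{G}$. Setting $T := 2(t-1)(r-1)\binom{n-2}{r-2}$ and building an ordered $t$-matching one edge at a time, each step discards at most $(r-1)\binom{n-2}{r-2}$ conflicting edges, so whenever $d_v \geq T$ each of the $t$ choices leaves at least $d_v/2$ valid edges, giving $N_t(L_v) \geq (d_v/2)^t/t!$. The light vertices contribute at most $nT = O(n^{r-1})$ to the degree sum $\sum_v d_v = rN$, so either $N = O(n^{r-1})$ (in which case the target bound already follows, since $r - 1 < r - 1/t$ for $t \geq 2$), or the heavy vertices carry at least $rN/2$ of the degree mass. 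In the latter case the power-mean inequality yields $\sum_{v \text{ heavy}} d_v^t \geq (rN/2)^t / n^{t-1}$, and combining the two estimates closes the loop $N^t/n^{t-1} = O(n^{t(r-1)})$, i.e.\ $N = O(n^{r-1/t})$. The main technical point is the greedy lower bound on $N_t(L_v)$; it hinges on the universal codegree cap in $L_v$ that prevents a clustered link from destroying all of its $t$-matchings.
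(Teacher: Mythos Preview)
Your argument is correct and complete; the double count of pairs (vertex, $t$-matching in its link) goes through exactly as you describe, and the greedy lower bound on $N_t(L_v)$ via the trivial codegree cap $\binom{n-2}{r-2}$ is the right engine for the convexity step.

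The paper takes a rather different route. It first passes to an $r$-partite host via the Erd\H{o}s--Kleitman lemma, then decomposes the complete $(r-1)$-partite $(r-1)$-graph on $A_1\times\cdots\times A_{r-1}$ into $n^{r-2}$ perfect matchings $M_1,\ldots,M_{n^{r-2}}$, pigeonholes to find one $M_i$ carrying more than $c'_{s,t}n^{2-1/t}$ edges, and reads that slice as a bipartite graph on $A_{r-1}\cup B$; the ordinary K\H{o}v\'ari--S\'os--Tur\'an bound then produces a $K_{t,s}$ with the $t$-side in $A_{r-1}$, which lifts through $M_i$ to a $K_{s,t}^{(r)}$ because $M_i$ is a matching. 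So the paper's proof is a reduction to the graph case, black-boxing the bipartite Zarankiewicz bound, whereas yours reproves the K\H{o}v\'ari--S\'os--Tur\'an mechanism directly at the hypergraph level. The paper's approach is shorter and more modular (it inherits whatever constant the graph theorem gives), but it needs the $r$-partite reduction and the $1$-factorization trick; your approach is self-contained and avoids both auxiliary steps, at the cost of the explicit greedy-matching lemma for links. Either is a perfectly good proof of the theorem.
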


Let $\mathcal{T}$ in Theorem \ref{thm:GTLB} be an edge. Combining Theorems \ref{thm:GTLB} and \ref{thm:CTUB}, we can obtain the following asymptotic order for Tur\'{a}n number of complete bipartite $r$-graphs.

\begin{corollary}\label{cor:KSTR}
Let $r\geqslant 3.$ For any positive integer $t,$ there exists some constant $c_{r,t}$ which depends on $r$ and $t$, such that when $s\geqslant c_{r,t},$ we have
  \begin{equation*}
    \textup{ex}_{r}(n,K_{s,t}^{(r)})=\Theta(n^{r-\frac{1}{t}}).
  \end{equation*}
\end{corollary}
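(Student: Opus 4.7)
The corollary is essentially a direct combination of the two theorems stated immediately before it, so the plan is very short: I would simply instantiate Theorem~\ref{thm:GTLB} and Theorem~\ref{thm:CTUB} with matching parameters and observe that the exponents coincide.

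More precisely, for the lower bound, I would take $\mathcal{T}$ in Theorem~\ref{thm:GTLB} to be a single $r$-edge, so that $v=r$ and $e=1$. Since counting copies of a single edge is the same as counting edges, $\textup{ex}(n,\mathcal{T},K_{s,t}^{(r)})$ coincides with the ordinary Tur\'an number $\textup{ex}(n,K_{s,t}^{(r)})$. Theorem~\ref{thm:GTLB} then yields a constant $c=c(r,t)$ such that whenever $s\geqslant c$,
\[
\textup{ex}(n,K_{s,t}^{(r)})=\Omega\bigl(n^{r-\frac{1}{t}}\bigr).
\]

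For the matching upper bound, I would invoke Theorem~\ref{thm:CTUB}, which states $\textup{ex}(n,K_{s,t}^{(r)})=O(n^{r-\frac{1}{t}})$ whenever $\tfrac{n}{r}\geqslant s\geqslant t\geqslant 2$. For any fixed $s,t,r$ with $s\geqslant t\geqslant 2$, the hypothesis $n/r\geqslant s$ holds for all sufficiently large $n$, so the asymptotic upper bound applies.

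Finally, I would set $c_{r,t}:=\max\{c,\,t,\,2\}$, where $c$ is the constant from Theorem~\ref{thm:GTLB}. For $s\geqslant c_{r,t}$, both hypotheses are satisfied simultaneously, and combining the two bounds yields $\textup{ex}(n,K_{s,t}^{(r)})=\Theta(n^{r-\frac{1}{t}})$. There is no real obstacle here; the entire content of the corollary lies in the two preceding theorems, and the only thing to verify is the trivial bookkeeping that the single-edge specialization of Theorem~\ref{thm:GTLB} reproduces the exponent $r-\tfrac{1}{t}$ appearing in Theorem~\ref{thm:CTUB}.
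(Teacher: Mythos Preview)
Your proposal is correct and follows exactly the same approach as the paper: the paper also derives the corollary by taking $\mathcal{T}$ to be a single edge in Theorem~\ref{thm:GTLB} and combining the resulting lower bound with the upper bound of Theorem~\ref{thm:CTUB}. Your additional bookkeeping for the constant $c_{r,t}$ is slightly more explicit than the paper's one-line justification, but the content is identical.
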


If $\mathcal{T}$ is a complete bipartite $r$-graph $K_{a,b}^{(r)}$, where $a=1$ and $b<t$, then we obtain the asymptotic bound for generalized Tur\'{a}n number $\text{ex}_{r}(n,K_{a,b}^{(r)},K_{s,t}^{(r)})$.
\begin{theorem}\label{thm:generalKST}
 Let $r\geqslant 3.$ For any positive integer $t,$ there exists some constant $c'_{r,t}$ which depends on $r$ and $t$ such that if $s\geqslant c'_{r,t},$ $a=1$ and $b<t$, then we have
  \begin{equation*}
    \textup{ex}_{r}(n,K_{a,b}^{(r)},K_{s,t}^{(r)})=\Theta(n^{a+b(r-1)-\frac{ab}{t}}).
  \end{equation*}
\end{theorem}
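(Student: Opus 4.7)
The plan is to prove the lower and upper bounds separately. The lower bound follows immediately from Theorem \ref{thm:GTLB} applied with $\mathcal{T} = K_{a,b}^{(r)}$: since $K_{a,b}^{(r)}$ has $v = a + b(r-1)$ vertices and $e = ab$ edges, Theorem \ref{thm:GTLB} gives
\[
\text{ex}(n,K_{a,b}^{(r)},K_{s,t}^{(r)}) = \Omega\big(n^{v - e/t}\big) = \Omega\big(n^{a + b(r-1) - ab/t}\big),
\]
provided $s$ is at least the constant from Theorem \ref{thm:GTLB}. Taking $c'_{r,t}$ to be at least this constant settles the lower bound.

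For the upper bound, I would parallel the K\"{o}v\'{a}ri--S\'{o}s--Tur\'{a}n style argument that Ma, Yuan and Zhang used in the graph case. Let $\mathcal{G}$ be an $n$-vertex $K_{s,t}^{(r)}$-free $r$-graph, and for each $a$-set $A \subseteq V(\mathcal{G})$ set $d(A) := |\mathcal{N}(A)|$, where $\mathcal{N}(A) = \{X \in \binom{V \setminus A}{r-1} : X \cup \{y\} \in E(\mathcal{G}) \text{ for every } y \in A\}$. Since every copy of $K_{a,b}^{(r)}$ is specified by its $Y$-part $A$ together with a $b$-matching inside $\mathcal{N}(A)$, the number of copies is at most $C_{a,b}\sum_{|A|=a}\binom{d(A)}{b}$. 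Applying H\"{o}lder's inequality with exponents $t/b$ and $t/(t-b)$ then gives
\[
\sum_{|A|=a} \binom{d(A)}{b} \;\leq\; C \left(\sum_{|A|=a} d(A)^t\right)^{b/t} \binom{n}{a}^{1-b/t},
\]
reducing the whole task to the key estimate $\sum_{|A|=a} d(A)^t = O(n^{t(r-1)})$; substituting this back yields the desired bound $O(n^{a + b(r-1) - ab/t})$.

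The key estimate is where the main difficulty lies. Exchanging the order of summation,
\[
\sum_{|A|=a} d(A)^t \;=\; a! \sum_{(X_1,\ldots,X_t)} \binom{|N(\{X_i\})|}{a},
\]
where the inner sum runs over ordered $t$-tuples of $(r-1)$-sets (with repetitions allowed) and $N(\{X_i\}) = \bigcap_i N(X_i) \setminus \bigcup_i X_i$. For ordered tuples whose underlying set is pairwise disjoint, the $K_{s,t}^{(r)}$-freeness of $\mathcal{G}$ directly forces $|N(\{X_i\})| \leq s-1$, and this portion contributes only $O(n^{t(r-1)})$. The main obstacle is the contribution from tuples with overlapping $(r-1)$-sets, for which no such direct bound is available: to handle them I would combine the AM--GM inequality $|N(\{X_i\})|^t \leq \prod_i d(X_i)$ with the edge estimate $\sum_X d(X) = r\,|E(\mathcal{G})| = O(n^{r-1/t})$ supplied by Theorem \ref{thm:CTUB}, and then apply the power-mean inequality to $\sum_X d(X)^{a/t}$ to control the overlap contribution. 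A careful case analysis of the overlap structure, exploiting the freedom to take $s \geq c'_{r,t}$ sufficiently large compared with $r$ and $t$, should suffice to absorb the overlap terms into $O(n^{t(r-1)})$ and complete the proof.
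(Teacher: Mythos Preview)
Your approach is essentially identical to the paper's. The lower bound is obtained exactly as you say, by plugging $\mathcal{T}=K_{a,b}^{(r)}$ (so $v=a+b(r-1)$, $e=ab$) into Theorem~\ref{thm:GTLB}. For the upper bound the paper does precisely the power-mean/H\"older reduction you outline: for each $a$-set $A$ it sets $N_A$ to be the common link of $(r-1)$-tuples, bounds the number of copies by $\sum_A \binom{|N_A|}{b}$, and applies $\sum_i x_i^{p}\le m^{1-p/q}\big(\sum_i x_i^{q}\big)^{p/q}$ with $p=b$, $q=t$ to reduce everything to the estimate $\sum_A |N_A|^{t}=O(n^{t(r-1)})$.

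Where you are more careful than the paper is exactly at this key estimate. The paper simply asserts
\[
\sum_A |N_A|^{t}\le (1+o(1))\binom{s-1}{a}\,(t(r-1))!\,n^{t(r-1)}
\]
by double counting: for any $t$ pairwise vertex-disjoint $(r-1)$-tuples, at most $\binom{s-1}{a}$ choices of $A$ can be joined to all of them, or a $K_{s,t}^{(r)}$ appears. The contribution of overlapping or repeated tuples --- what you call ``the main obstacle'' --- is not analysed separately; it is absorbed into the $(1+o(1))$. So the careful AM--GM/edge-count case analysis you propose is \emph{not} carried out in the paper; the paper's argument is shorter, not longer, than your plan. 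Your concern about the non-disjoint terms is legitimate (indeed, a single $(r-1)$-set $X$ with $|N(X)|$ of order $n$ already contributes a diagonal term of size roughly $n^{a}$, which is not obviously $o(n^{t(r-1)})$ when $a$ is large), and making that step rigorous would strengthen rather than deviate from the published proof.
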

In the simple graph case, there were several results shown in~\cite{Alon2016, Beka2019JGT, GerbnerJCTB2020, Gerbner2019EUJC} concerning the generalized Tur\'{a}n problems. However, in the hypergraph case, much less is known about the Tur\'{a}n numbers. Corollary~\ref{cor:KSTR} determines the asymptotic order for Tur\'{a}n numbers of complete bipartite $r$-graphs $K_{s,t}^{(r)}$ when $s$ is sufficiently larger than $t$. Moreover, the situation is even worse for the generalized hypergraph Tur\'{a}n problems, where we only know such tight results due to Ma, Yuan and Zhang~\cite{Ma2018}. Hence Theorem~\ref{thm:generalKST} provides some new tight results on the generalized hypergraph Tur\'{a}n problems.

In addition to the results mentioned above, we also consider the case when $\mathcal{H}$ is a complete $r$-partite $r$-graph $K_{s_{1},s_{2},\ldots,s_{r}}^{(r)}$, which can be seen as another extension of the complete bipartite graph. As we have mentioned, Mubayi~\cite{Mubayi2002} conjectured that $\text{ex}_{r}(n,K_{s_{1},s_{2},\ldots,s_{r}}^{(r)})=\Theta(n^{r-\frac{1}{\prod_{i=1}^{r-1}s_{i}}})$, where $s_{1}\leqslant s_{2}\leqslant\dots\leqslant s_{r}$. In particular, when $s_{1},s_{2},\ldots,s_{r}$ are relatively small, the case $\text{ex}_{r}(n,K_{s_{1},s_{2},\ldots,s_{r}}^{(r)})$ is more interesting. For example, Katz, Krop and Maggioni~\cite{Katz2002} showed that $\text{ex}_{3}(n,K_{2,2,2}^{(3)})=\Omega(n^{\frac{8}{3}}),$ which beats the lower bound from the probabilistic method. Next, we will show an improved lower bound for $\text{ex}_{3}(n,K_{2,2,7}^{(3)})$ as follows.
\begin{theorem}\label{thm:r=3}
  \[\textup{ex}_{3}(n,K_{2,2,7}^{(3)})\geqslant \frac{1}{27}n^{\frac{19}{7}}+o(n^{\frac{19}{7}}).\]
\end{theorem}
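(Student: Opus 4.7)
The plan is to exhibit an explicit algebraic construction of a $K_{2,2,7}^{(3)}$-free $3$-graph with the claimed density, thereby replacing the probabilistic-deletion exponent $73/27$ by a tighter algebraic one. The target exponent $\tfrac{19}{7}=3-\tfrac{2}{7}$ has precisely the form $3-2/s$ with $s=7$ characteristic of polynomial constructions on a $7$-dimensional affine space, so I would work over $\mathbb{F}_q^{7}$ for a prime power $q$. The leading constant $\tfrac{1}{27}=(1/3)^3$ is the natural normalization for a balanced tripartite construction, which suggests a tripartite $3$-graph on three equal parts $V_1, V_2, V_3$.

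Concretely, let $V_1, V_2, V_3$ be disjoint copies of $\mathbb{F}_q^{7}$, set $V=V_1\cup V_2\cup V_3$ so that $n=3q^{7}$, and define an edge set by declaring a triple $\{x,y,z\}$ with $x\in V_1$, $y\in V_2$, $z\in V_3$ to be an edge iff
\[
F(x,y,z)=0,
\]
where $F=(F_1,F_2)\colon V_1\times V_2\times V_3\to\mathbb{F}_q^{2}$ is a pair of low-degree polynomials of, say, bilinear or Vandermonde type. This cuts out a codimension-$2$ subvariety of $(q^{7})^{3}$, which by Lang--Weil (or a direct character-sum computation) contains $q^{19}+O(q^{19-1/2})$ points. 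Rewriting in terms of $n$ and choosing the top-degree part of $F$ so that the leading coefficient in the point count is appropriately normalized yields an edge count of at least $\tfrac{1}{27}n^{19/7}+o(n^{19/7})$.

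The heart of the proof is verifying the $K_{2,2,7}^{(3)}$-freeness: for every $\{a_1,a_2\}\subseteq V_1$ and $\{b_1,b_2\}\subseteq V_2$, the common-neighbor set
\[
\bigl\{\, z\in V_3 \,:\, F(a_i,b_j,z)=0 \text{ for all } i,j\in\{1,2\}\,\bigr\}
\]
must have size at most $6$. In generic position this is the intersection of $8$ polynomial hypersurfaces in the $7$ coordinates of $z$, which by B\'ezout's theorem is zero-dimensional of degree bounded by the product of the individual degrees, and a low-degree choice of $F$ forces this size to be at most $6$. The main obstacle is the \emph{non-generic case}: quadruples $(a_1,a_2,b_1,b_2)$ for which the $8$ restriction equations become algebraically dependent and the naive B\'ezout bound fails. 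Controlling this requires either a carefully structured $F$ (for instance a Vandermonde- or norm-type polynomial, analogous to the Koll\'ar--R\'onyai--Szab\'o and Mubayi--Verstra\"ete constructions, that rules out such dependencies outright) or the deletion of a lower-order number of edges incident to the bad quadruples, the latter absorbed into the $o(n^{19/7})$ term. Once both the edge count and the codegree bound are established, the theorem follows.
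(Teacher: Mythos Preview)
Your outline has the right shape (tripartite, algebraic, codimension matched to the exponent), but the concrete mechanism you propose does not yield the bound $\leq 6$ on common neighbourhoods, and that is where all the content lies.

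The paper's construction differs from your $\mathbb{F}_q^{7}$-with-two-polynomials scheme in two essential ways. First, the parts are not $\mathbb{F}_q^{7}$ but $\mathbb{F}_{q^{3}}\times B_i$, where each $B_i\subset\mathbb{F}_q^{*}$ has size $l\approx\tfrac12\sqrt{q}$, so each part has size $\approx\tfrac12 q^{7/2}$; a triple is an edge iff the \emph{single} norm equation $\mathrm{Norm}_{\mathbb{F}_{q^3}/\mathbb{F}_q}(D_1+D_2+D_3)=d_1d_2d_3$ holds. Fixing two vertices in each of $A_1,A_2$ then gives four equations $\mathrm{Norm}(T_{ij}+X)=t_{ij}x$ in the unknown $(X,x)\in\mathbb{F}_{q^3}\times\mathbb{F}_q^{*}$, and the Koll\'ar--R\'onyai--Szab\'o lemma bounds the number of solutions by $(4-1)!=6$ \emph{provided the four pairs $(T_{ij},t_{ij})$ are distinct}. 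Second --- and this is the idea entirely absent from your proposal --- that distinctness is forced by choosing the $B_i$ as images under a generator of additive sets $S_i\subset\mathbb{Z}_{q-1}$ with $|S_i+S_j|=|S_i||S_j|$ for $i\neq j$, so the four products $d_ie_j$ are automatically pairwise distinct. The norm map supplies the factorial $3!$; the Sidon-type restriction supplies the hypothesis of the KRS lemma. Neither appears in your sketch beyond a parenthetical ``or norm-type''.

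By contrast, your B\'ezout route does not produce $6$. With bilinear $F$, the eight restricted equations are linear in $z\in\mathbb{F}_q^{7}$, so the solution set is an affine subspace of size $0$, $1$, or $\geq q$; with $F$ of degree $d\geq 2$ in $z$, B\'ezout on seven of the eight hypersurfaces already gives $d^{7}\geq 128$, not $6$. Your fallback of deleting edges on ``bad'' quadruples is also unjustified without a specific $F$ and a count of the degenerate locus; there is no reason a priori that this locus is of lower order.
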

The best previously known lower bound $\text{ex}_{3}(n,K_{2,2,7}^{(3)})=\Omega(n^{\frac{73}{27}})$ was obtained by probabilistic methods. Theorem~\ref{thm:r=3} improves this by an explicit construction. Note that the upper bound part of Mubayi's conjecture was
proven by Erd\H{o}s~\cite{1964Erdos}, so the lower bound is the more interesting part.

The rest of this paper is organized as follows. In Section \ref{sec:bipartite}, we focus on the complete bipartite $r$-graphs. First we prove Theorem~\ref{thm:GTLB} via the random algebraic construction. Then we give some general upper bounds to derive Corollary~\ref{cor:KSTR} and Theorem~\ref{thm:generalKST} in Section~\ref{sec:Upperbound}. In Section~\ref{sec:degenerate}, we provide a new lower bound for $\text{ex}_{3}(n,K_{2,2,7}^{(3)})$. Section~\ref{sec:remarks} contains some remarks and the remaining problems on the main topics.

\section{Constructions for $K_{s,t}^{(r)}$-free $r$-graphs, $s>t$}\label{sec:bipartite}
 In this section, our goal is to prove Theorem \ref{thm:GTLB} via the random algebraic construction.

 \subsection{Random algebraic construction}
 As far as we know, usually there are two types of constructions as follows:
 \begin{enumerate}
   \item Randomized constructions with alternations, which are quite general and easy to apply, but usually do not give tight bounds.
   \item Algebraic constructions, which give tight bounds but appear to be somewhat magical and only work in certain special situations.
 \end{enumerate}
 We briefly review the related work in hypergraph Tur\'{a}n problem. Let $\mathcal{H}$ be an $r$-graph with $v$ vertices and $e$ edges. It was shown in \cite{Brown1973} that
  \begin{equation*}
  \textup{ex}_{r}(n,\mathcal{H})=\Omega(n^{\frac{er-v}{e-1}}).
  \end{equation*}
The above lower bound was obtained by a standard probabilistic argument. For example, when $\mathcal{H}=K_{s,t}^{(r)},$ the randomized construction gives a lower bound $\textup{ex}_{r}(n,K_{s,t}^{(r)})=\Omega(n^{r-\frac{1}{t}-\frac{(r-1)t^{2}-rt+1}{st^{2}-t}}).$

  Recently, there is an interesting idea of Bukh \cite{Bukh2015RAC} called ``random algebraic construction'', which combines these two approaches. The idea is to construct a graph with vertex set $V=\mathbb{F}_{q}^{s}\times \mathbb{F}_{q}^{s}$, just by choosing a random polynomial $f\in \mathbb{F}_{q}[x_{1},x_{2},\ldots,x_{s},y_{1},y_{2},\ldots,y_{s}]$ (within a certain family, say with bounded degree) and letting $(x,y)\in V$ be an edge if and only if $f(x,y)=0.$ The method aims to combine the advantages of both the flexibility of randomized constructions and the rigidity of algebraic constructions. Several papers \cite{Matthew2019, Tait2019SIDMA, Ma2018} developed this method and generalized the idea to hypergraphs.

 In order to apply the random algebraic method, our first task is to establish the relationship between polynomials and hypergraphs.

 For given positive integers $t,r$ with $r\geqslant 3$ and an $r$-graph $\mathcal{T}$ with $v$ vertices and $e$ edges, throughout this section, we always denote $d=(r-1)t^{2}-t+e+1$. Let $q$ be a sufficiently large prime power, and $\mathbb{F}_{q}$ be the finite field of order $q$.

 Let $\textbf{X}^{i}=(X_{1}^{i},X_{2}^{i},\ldots,X_{t}^{i})\in \mathbb{F}_{q}^{t}$ for each $i\in [r]$. Consider polynomials $f\in \mathbb{F}_{q}[\textbf{X}^{1},\textbf{X}^{2},\ldots,\textbf{X}^{r}]$ with $rt$ variables over $\mathbb{F}_{q}$. We say such a polynomial $f$ has degree at most $td$ in $\textbf{X}^{i},$ if each of its monomials has degree at most $td$ with respect to $\textbf{X}^{i}$, that is,  $(X_{1}^{i})^{\alpha_{1}}(X_{2}^{i})^{\alpha_{2}}\cdots(X_{t}^{i})^{\alpha_{t}}$ satisfies $\sum\limits_{j=1}^{t}\alpha_{j}\leqslant td.$ Moreover, a polynomial $f$ is called symmetric if exchanging $\textbf{X}^{i}$ with $\textbf{X}^{j}$ for every $1\leqslant i\leqslant j\leqslant r$ does not affect the value of $f$. For convenience, we can view the domain of symmetric polynomials as the family $\binom{\mathbb{F}_{q}^{t}}{r}$. Then given a symmetric polynomial $f$, we can define an $r$-graph $\mathcal{G}_{f}$ as following: the vertex set $V(\mathcal{G}_{f})$ is a copy of $\mathbb{F}_{q}^{t},$ and every $r$-tuple $\{u^{1},u^{2},\ldots,u^{r}\}\in \binom{V}{r}$ forms an edge of $\mathcal{G}_{f}$ if and only if $f(u^{1},u^{2},\ldots,u^{r})=0.$

 Let $\mathcal{P}\subseteq \mathbb{F}_{q}[\textbf{X}^{1},\textbf{X}^{2},\ldots,\textbf{X}^{r}]$ be the set of all symmetric polynomials of degree at most $td$ in $\textbf{X}^{i}$ for every $1\leqslant i \leqslant r.$ Then we choose a polynomial $f$ from $\mathcal{P}$ uniformly at random and let $\mathcal{G}=\mathcal{G}_{f}$ be the associated $r$-graph. Now we need to introduce two important lemmas from \cite{Bukh2015RAC} and \cite{Ma2018}. The first lemma is the key insight of the random algebraic construction, which provides very non-smooth probability distributions. While the second lemma will help us calculate the probability in certain situations.

 \begin{lemma}(\cite{Bukh2015RAC}, Lemma 5)\label{lemma:LB1}
   For every $t$ and $d$, there exists a constant $c>0$ such that the following holds: suppose $f_{1}(Y),f_{2}(Y),\ldots,f_{t}(Y)$ are $t$ polynomials on $\mathbb{F}_{q}^{t}$ of degree at most $td$, and consider the set
  \begin{equation*}
    W=\{y\in \mathbb{F}_{q}^{t}: f_{1}(y)=f_{2}(y)=\cdots=f_{t}(y)=0\}.
  \end{equation*}
  Then either $|W|<c$ or $|W|\geqslant q-c\sqrt{q}.$
 \end{lemma}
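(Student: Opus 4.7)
The plan is to prove the dichotomy via elementary algebraic geometry, combining Bezout's theorem (to control the ``sparse'' alternative) with the Lang--Weil estimate (to control the ``dense'' alternative). Let $V \subseteq \overline{\mathbb{F}_q}^{\,t}$ denote the affine algebraic set cut out by $f_1 = f_2 = \cdots = f_t = 0$, so that $W = V(\mathbb{F}_q)$. The starting observation is that the total degree of $V$ is bounded by $\prod_{i=1}^{t} \deg(f_i) \le (td)^t$, a quantity depending only on $t$ and $d$.

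First I would split on the geometric dimension of $V$. If $\dim V = 0$, then $V$ is a finite set and Bezout immediately yields $|W| \le |V| \le (td)^t$; choosing $c \ge (td)^t$ makes the first alternative of the lemma hold. Otherwise $\dim V \ge 1$, and I would decompose $V$ over $\overline{\mathbb{F}_q}$ into absolutely irreducible components $V = \bigcup_{j} Z_j$, each of degree bounded in terms of $t$ and $d$. If some $Z_j$ of dimension at least one is defined over $\mathbb{F}_q$, then the Lang--Weil estimate yields
\[
|Z_j(\mathbb{F}_q)| \;=\; q^{\dim Z_j} + O_{t,d}\!\bigl(q^{\dim Z_j - 1/2}\bigr) \;\ge\; q - c\sqrt{q}
\]
for an appropriate $c = c(t,d)$, and since $W \supseteq Z_j(\mathbb{F}_q)$ the second alternative follows.

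The remaining case---$\dim V \ge 1$ yet no positive-dimensional $Z_j$ descends to $\mathbb{F}_q$---is the delicate one. Here I would employ Galois descent: an $\mathbb{F}_q$-point lying on $Z_j$ must be Frobenius-fixed, hence must lie in $Z_j \cap \sigma(Z_j)$, a proper closed subvariety of strictly smaller dimension whose degree is still controlled in terms of $t$ and $d$. Iterating this intersection argument reduces the problem to the zero-dimensional situation already handled, bounding $|W|$ by a constant depending only on $t$ and $d$, so once more the first alternative holds.

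The main obstacle is precisely this Galois-descent step: one must verify rigorously that the dimension strictly drops at each round, that the iteration terminates after a bounded number of rounds (in terms of $(td)^t$), and crucially that no ``intermediate'' regime exists in which $|W|$ sits strictly between $c$ and $q - c\sqrt{q}$. An alternative, more self-contained route would be induction on $t$ by slicing along one coordinate and applying the inductive hypothesis to each fiber, which sidesteps Lang--Weil in full strength but trades it for a careful combinatorial argument pooling the fiberwise dichotomies (``almost-full'' versus ``bounded'' slices) into a single global dichotomy for $|W|$.
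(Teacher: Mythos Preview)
The paper does not prove this lemma at all: it is quoted verbatim from Bukh's paper \cite{Bukh2015RAC} (Lemma~5 there) and used as a black box, so there is no in-paper argument to compare against. Your outline is in fact the standard route and is essentially how Bukh establishes the result---the dichotomy is a consequence of the Lang--Weil estimates applied to the affine variety cut out by the $f_i$, together with B\'ezout-type degree bounds to control the constants.

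One point worth tightening in your descent step: as you have written it, the case ``$\dim V\ge 1$ but no positive-dimensional component is $\mathbb{F}_q$-rational'' is claimed to reduce, after iterated intersection with Frobenius conjugates, to the zero-dimensional case and hence to the \emph{first} alternative. That is not automatic. The lower-dimensional locus you pass to (the union of pairwise intersections of conjugate components, together with the original lower-dimensional pieces) is again an $\mathbb{F}_q$-variety of bounded degree, but it may well possess a geometrically irreducible $\mathbb{F}_q$-rational component of positive dimension even though $V$ itself did not; in that event Lang--Weil kicks in and it is the \emph{second} alternative that holds. So the recursion must reapply the full trichotomy at each stage, not merely the intersection step. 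You flag exactly this concern in your ``main obstacle'' paragraph, so you are aware of it; just make sure the write-up states the induction as ``either some stage exhibits an $\mathbb{F}_q$-rational positive-dimensional component (dense alternative) or the process terminates at dimension zero (bounded alternative)'' rather than presupposing the latter outcome.
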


 \begin{lemma}(\cite{Ma2018}, Lemma 2.2)\label{lemma:LB2}
    Given a set $U\subseteq \binom{\mathbb{F}_{q}^{t}}{r},$ let $V\subseteq \mathbb{F}_{q}^{t}$ be the set consisting of all points appearing as an element of an $r$-tuple in $U$. Suppose that $\binom{|U|}{2}<q,$ $\binom{|V|}{2}<q$ and $|U|\leqslant td.$ If $f$ is a random polynomial chosen from $\mathcal{P}$, then
  \begin{equation*}
    \mathbb{P}[f(u^{1},u^{2},\ldots,u^{r})=0,  \forall \{u^{1},u^{2},\ldots,u^{r}\}\in U]=q^{-|U|}.
  \end{equation*}
 \end{lemma}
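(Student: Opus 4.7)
The plan is to recognize this as a pure linear-algebra statement about the evaluation map and reduce it to constructing "indicator" polynomials. Since $\mathcal{P}$ is an $\mathbb{F}_q$-vector space, the map
\[ \text{ev}_U \colon \mathcal{P} \longrightarrow \mathbb{F}_q^{U}, \qquad f \longmapsto \bigl(f(\omega)\bigr)_{\omega \in U} \]
is $\mathbb{F}_q$-linear, and a uniformly chosen $f$ in a finite-dimensional vector space hits any value in the image of a linear map with probability $|\ker|/|\mathcal{P}| = 1/|\text{image}|$. In particular, the probability in question equals $q^{-|U|}$ \emph{if and only if} $\text{ev}_U$ is surjective. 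By linearity, surjectivity reduces to constructing, for each $\omega_0 \in U$, a polynomial $P_{\omega_0} \in \mathcal{P}$ with $P_{\omega_0}(\omega_0) \neq 0$ and $P_{\omega_0}(\omega) = 0$ for every other $\omega \in U$.

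My construction of $P_{\omega_0}$ proceeds in three steps. First, I would use the condition $\binom{|V|}{2} < q$ to find a linear functional $\lambda \colon \mathbb{F}_q^{t} \to \mathbb{F}_q$ that is injective on $V$: the functionals that collapse a fixed pair of points form a single hyperplane in the $q^{t}$-dimensional space of linear functionals, and there are fewer than $q$ such pairs, so a union bound on hyperplanes leaves many valid $\lambda$. Second, for each $\omega \in U \setminus \{\omega_0\}$ I pick a witness $v_\omega \in \omega \setminus \omega_0$ (nonempty, since $\omega$ and $\omega_0$ are distinct $r$-sets of equal size) and form the hitting set $W = \{v_\omega : \omega \neq \omega_0\}$; then $|W| \leq |U| - 1$, $W \subseteq V \setminus \omega_0$, and $W$ meets every $\omega \in U \setminus \{\omega_0\}$. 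Third, I put
\[ P_{\omega_0}(\textbf{X}^{1}, \ldots, \textbf{X}^{r}) \;=\; \prod_{j=1}^{r} \prod_{v \in W} \bigl(\lambda(\textbf{X}^{j}) - \lambda(v)\bigr), \]
which is manifestly symmetric in the $r$ blocks. On any ordering of $\omega_0$, every factor is of the form $\lambda(u) - \lambda(v)$ with $u \in \omega_0$ and $v \in V \setminus \omega_0$, hence nonzero by injectivity of $\lambda$; on any ordering of $\omega \neq \omega_0$ some block $\textbf{X}^{j}$ receives the value $v_\omega \in W$, killing the factor $\lambda(v_\omega) - \lambda(v_\omega)$ and hence the whole product.

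The degree of $P_{\omega_0}$ in each block is $|W| \leq |U| - 1 \leq td - 1$, which safely places $P_{\omega_0}$ in $\mathcal{P}$. After rescaling by $1/P_{\omega_0}(\omega_0)$, these polynomials map under $\text{ev}_U$ to the standard basis of $\mathbb{F}_q^{U}$, proving surjectivity. The main thing to worry about is the degree accounting: the obvious alternative route via Lagrange interpolation against a symmetric scalar separator $g$ (a random linear combination of power-sum polynomials in $\lambda(\textbf{X}^{j})$, with injectivity on $U$ secured by $\binom{|U|}{2} < q$) only gives degree bound $O(r(|U|-1))$ in each block, which overshoots the budget $td$. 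The hitting-set construction above sidesteps this by directly building the indicators from degree-one factors, so the only nontrivial hypothesis it consumes is $\binom{|V|}{2} < q$ (used for injectivity of $\lambda$) together with $|U| \leq td$; the second counting hypothesis $\binom{|U|}{2} < q$ of the lemma is presumably what permits the slicker scalar-separator proof, but either path leads to the same conclusion.
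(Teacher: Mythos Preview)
The paper does not prove this lemma; it is quoted verbatim from \cite{Ma2018} and simply invoked. So there is no in-paper argument to compare against, and the relevant question is whether your proposal stands on its own.

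It does. The reduction to surjectivity of the evaluation map $\mathrm{ev}_U\colon\mathcal{P}\to\mathbb{F}_q^{U}$ is exactly the right framing, and your hitting-set indicator
\[
P_{\omega_0}(\textbf{X}^{1},\ldots,\textbf{X}^{r})=\prod_{j=1}^{r}\prod_{v\in W}\bigl(\lambda(\textbf{X}^{j})-\lambda(v)\bigr)
\]
is a clean construction: symmetry is automatic from the product over $j$, the nonvanishing on $\omega_0$ follows from injectivity of $\lambda$ on $V$ together with $W\cap\omega_0=\varnothing$, the vanishing on each $\omega\neq\omega_0$ comes from the witness $v_\omega$ landing in some block, and the degree in each block is $|W|\leqslant|U|-1<td$, so $P_{\omega_0}\in\mathcal{P}$. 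One verbal slip: the space of linear functionals on $\mathbb{F}_q^{t}$ is $t$-dimensional (of cardinality $q^{t}$), not ``$q^{t}$-dimensional''; the union-bound step is still correct since $\binom{|V|}{2}<q$ hyperplanes of size $q^{t-1}$ cannot cover all $q^{t}$ functionals.

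Your observation that the hypothesis $\binom{|U|}{2}<q$ goes unused in this argument is accurate. The original proof in \cite{Ma2018} (following Bukh's template) proceeds instead by first separating the $r$-tuples in $U$ by a scalar-valued symmetric polynomial and then interpolating, which is where that extra hypothesis enters. Your direct construction is arguably more transparent and consumes strictly fewer assumptions, at the cost of only handling the all-zero target value---but that is all the lemma asserts.
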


 With the above tools in hand, we are ready to prove Theorem \ref{thm:GTLB}.

\subsection{Proof of Theorem \ref{thm:GTLB}}
We choose a polynomial $f\in \mathcal{P}$ uniformly at random and let $\mathcal{G}$ be the associated $r$-graph $\mathcal{G}_{f}$. Let $n=q^{t}$ be the number of vertices in $\mathcal{G}$, where $q$ is sufficiently large. Though this result only holds when $q$ is a
prime power and $n=q^{t},$ it is a simple matter to use Bertrand's postulate to show that the same conclusion holds for all positive integers $n$.
We will show that on average this $\mathcal{G}$ contains many copies of $\mathcal{T}$ but very few copies of $K_{s,t}^{(r)}$, assuming $s$ is sufficiently large. Then we can use the alteration argument to obtain a subhypergraph $\mathcal{G'}$ which is $K_{s,t}^{(r)}$-free and $\mathcal{G'}$ still contains the expected number of copies of $\mathcal{T}$.

Since $\mathcal{T}$ has $v$ vertices and $e$ edges, it is easy to check that $\binom{v}{2}<q,$ $\binom{e}{2}<q$ and $e<t((r-1)t^{2}-t+e+1)=td.$ Then by Lemma \ref{lemma:LB2}, for given $v$ vertices, the probability that such $v$ vertices form a copy of $\mathcal{T}$ is equal to $\frac{1}{q^{e}}.$ Denote $X$ as the number of copies of $\mathcal{T}$ in $\mathcal{G}$, then the expectation
\begin{equation*}
  \mathbb{E}[X]= \Omega(\frac{1}{q^{e}}\binom{q^{t}}{v})=\Omega(q^{tv-e})=\Omega(n^{v-\frac{e}{t}}).
\end{equation*}
  Let $R$ be a fixed labeled copy of $K_{1,t}^{(r)}$, and we denote its vertices as $a$ and $u_{j}^{i}$'s for $1\leqslant j\leqslant t$ and $i\in [r-1]$ such that $u_{j}^{1},u_{j}^{2},\ldots,u_{j}^{r-1}$ form $t$ distinct $(r-1)$-tuples. Now fix any sequence of vertices $w_{j}^{i}$ for $1\leqslant j\leqslant t$ and $i\in [r-1]$ in $\mathcal{G}$. Let $W$ be the family of copies of $R$ in $\mathcal{G}$ such that $w_{j}^{i}$ corresponds to $u_{j}^{i}$ for all $1\leqslant j\leqslant t$ and $i\in [r-1]$. It is difficult to estimate $|W|$ directly, hence we consider the value of $|W|^{d}$. Note that $|W|^{d}$ counts the number of ordered collections of $d$ copies of $R$ from $W$, where these copies of $R$ may be the same. So each member of such collections can be an element $P$ in
\begin{equation*}
  \mathcal{K}:=\{K_{1,t}^{(r)},K_{2,t}^{(r)},\ldots,K_{d,t}^{(r)}\}.
\end{equation*}
For given $P\in \mathcal{K}$, denote $N_{d}(P)$ as the total number of all possible ordered collections of $d$ copies of $R\in W$, which could appear in $\mathcal{G}$ as a copy of $P$. It is easy to see that $N_{d}(P)=O(n^{|P|-t(r-1)})$, where $|P|$ is the number of vertices in $P$. Since the number of edges $e(P)=t(|P|-t(r-1))$ of $P$ is at most $td$ and $q$ is sufficiently large, by Lemma \ref{lemma:LB2}, the probability that a potential copy $P$ appears in $\mathcal{G}$ is $q^{-e(P)}$. Through the above analysis, we have
\begin{equation*}
  \mathbb{E}[|W|^{d}]=\sum\limits_{P\in \mathcal{K}}N_{d}(P)q^{-e(P)}=\sum\limits_{P\in \mathcal{K}}O(q^{t(|P|-t(r-1))})q^{-e(P)}=O(1).
\end{equation*}
Note that the set of unfixed vertices in $W$ consists of vertices $x\in \mathbb{F}_{q}^{t}$ satisfying the system of $t$ equations
\begin{equation*}
  f(w_{j}^{1},w_{j}^{2},\ldots,w_{j}^{r-1},x)=0
  \end{equation*}
  for $1\leqslant j\leqslant t.$ Because $f(w_{j}^{1},w_{j}^{2},\ldots,w_{j}^{r-1},\cdot)$ has degree at most $td,$ then by Lemma \ref{lemma:LB1}, either $|W|<c$ or $|W|\geqslant q-c\sqrt{q}\geqslant \frac{q}{10}$, where the value of $c$ depends on $t$ and $d$. By the Markov's inequality, we obtain that
  \begin{equation*}
    \mathbb{P}[|W|\geqslant c]=\mathbb{P}[|W|\geqslant \frac{q}{10}]=\mathbb{P}[|W|^{d}\geqslant (\frac{q}{10})^{d}]\leqslant \frac{\mathbb{E}[|W|^{d}]}{(\frac{q}{10})^{d}}=\frac{O(1)}{q^{d}}.
  \end{equation*}

A sequence of vertices $w_{j}^{i}$ for $1\leqslant j\leqslant t$ and $i\in [r-1]$ is called bad, if the corresponding set $W$ satisfies $|W|\geqslant c.$ Let $B$ be the number of bad sequences in $\mathcal{G}$, it follows that
\begin{equation*}
  \mathbb{E}[B]\leqslant [t(r-1)]!\binom{n}{t(r-1)}\frac{O(1)}{q^{d}}=O(q^{(r-1)t^{2}-d})=O(q^{t-e-1}).
\end{equation*}
Now we remove a vertex from each bad sequence to obtain a new hypergraph $\mathcal{G'}$, clearly $\mathcal{G'}$ does not contain any bad sequences, so $\mathcal{G'}$ is $K_{s,t}^{(r)}$-free for $s\geqslant c.$ Note that each vertex is in at most $O(n^{v-1})$ copies of $\mathcal{T}$ in $\mathcal{G}$, so the total number of copies of $\mathcal{T}$ removed is at most $ O(n^{v-1})\cdot B.$ Hence the expected number of the remaining copies of $\mathcal{T}$ in $\mathcal{G'}$ is at least
\begin{equation*}
  \Omega(n^{v-\frac{e}{t}})-\mathbb{E}[B]\cdot O(n^{v-1})=\Omega(n^{v-\frac{e}{t}}).
\end{equation*}
It is easy to check that the expected number of remaining vertices is $n-O(n^{1-\frac{e+1}{t}})=n-o(n).$

Therefore, for any $s\geqslant c,$ there exists a $K_{s,t}^{(r)}$-free $r$-graph with at most $n$ vertices and $\Omega(n^{v-\frac{e}{t}})$ copies of $\mathcal{T}$. This completes the proof of Theorem \ref{thm:GTLB}.

\section{Upper bound for $K_{s,t}^{(r)}$-free $r$-graphs}\label{sec:Upperbound}
The result in Theorem \ref{thm:GTLB} is intended to motivate our investigation of the matched upper bounds for some $r$-graphs $\mathcal{T}$. In this section, we will present matched upper bounds for $\text{ex}_{r}(n,K_{s,t}^{(r)})$ and $\text{ex}_{r}(n,K_{a,b}^{(r)},K_{s,t}^{(r)})$ under certain conditions.

\subsection{Upper bound for $\text{ex}_{r}(n,K_{s,t}^{(r)})$}

This upper bound for $\text{ex}_{r}(n,K_{s,t}^{(r)})$ can be seen as a generalization of the result of Mubayi and Verstra\"{e}te~\cite[Theorem 1.4]{Mubayi2004}. Before we prove Theorem \ref{thm:CTUB}, we need the following useful lemma of Erd\H{o}s and Kleitman~\cite{erdos1968}.

\begin{lemma}\label{lemma:r!/r}(\cite{erdos1968})
  Let $\mathcal{G}$ be an $r$-graph on $rn$ vertices. Then $\mathcal{G}$ contains an $r$-partite subhypergraph $\mathcal{G'}$, with all parts of size $n$, and $e(\mathcal{G'})\geqslant \frac{r!}{r^r}e(\mathcal{G})$.
\end{lemma}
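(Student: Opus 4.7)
The plan is to prove Lemma~\ref{lemma:r!/r} by a direct first-moment calculation. I would produce $\mathcal{G}'$ via a uniformly random equitable partition of $V(\mathcal{G})$ into $r$ ordered blocks $(V_1,\ldots,V_r)$ of size $n$ each, keeping exactly those edges of $\mathcal{G}$ whose vertices land in $r$ distinct blocks. The goal is to show that $\mathbb{E}[e(\mathcal{G}')] \ge \frac{r!}{r^r}\,e(\mathcal{G})$, which by averaging forces at least one realization of the partition to satisfy the claimed inequality, giving the desired $r$-partite subhypergraph.

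The core step is a short counting identity for the probability that a fixed edge $e=\{v_1,\ldots,v_r\}$ of $\mathcal{G}$ becomes rainbow. The total number of ordered equipartitions is $(rn)!/(n!)^r$. Among these, the ones distributing the vertices of $e$ into $r$ distinct blocks are obtained by first choosing the bijection from $\{v_1,\ldots,v_r\}$ onto the $r$ blocks, in $r!$ ways, and then distributing the remaining $rn-r$ vertices among the blocks with residual sizes $n-1$ each, in $(rn-r)!/((n-1)!)^r$ ways. Dividing yields
\begin{equation*}
\mathbb{P}[e \text{ is rainbow}]
= \frac{r!\,(rn-r)!/((n-1)!)^r}{(rn)!/(n!)^r}
= \frac{r!\, n^r}{(rn)(rn-1)\cdots(rn-r+1)}
\ \ge\ \frac{r!\, n^r}{(rn)^r}
= \frac{r!}{r^r},
\end{equation*}
and then linearity of expectation over the edges of $\mathcal{G}$ gives $\mathbb{E}[e(\mathcal{G}')] \ge \frac{r!}{r^r}\,e(\mathcal{G})$.

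There is no essential obstacle here; the argument is a clean averaging. The one modeling choice worth flagging is that the more naive approach of coloring each vertex independently and uniformly with one of $r$ colors makes every edge rainbow with probability exactly $r!/r^r$, but fails to guarantee the equal block sizes demanded by the lemma. Working with the ordered equipartition model enforces the size constraint for free, at the mild cost of the slightly fussier probability computation above; in fact that computation shows the bound $r!/r^r$ is a little loose for finite $n$, with equality only in the limit $n\to\infty$.
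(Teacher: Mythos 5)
Your proof is correct. Note that the paper itself gives no proof of this lemma---it is quoted directly from \erdos\ and Kleitman \cite{erdos1968}---so there is nothing internal to compare against; your random-equipartition first-moment argument is the standard proof of exactly this statement. The computation checks out: the ratio $\frac{r!\,(rn-r)!/((n-1)!)^r}{(rn)!/(n!)^r}$ simplifies to $\frac{r!\,n^r}{(rn)(rn-1)\cdots(rn-r+1)}\geqslant \frac{r!}{r^r}$, and linearity of expectation plus averaging yields a partition realizing the bound. Your remark distinguishing the equipartition model from independent uniform coloring is also the right point to flag---the naive coloring gives the rainbow probability $r!/r^r$ exactly but cannot enforce parts of size $n$, whereas the equipartition model gives a slightly stronger finite-$n$ bound for free. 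No gaps.
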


 We write $z(n,K_{s,t}^{(r)})$ for the maximum number of edges in an $r$-partite $K_{s,t}^{(r)}$-free $r$-graph in which all parts have size $n$. By Lemma \ref{lemma:r!/r}, it suffices to prove that $z(n,K_{s,t}^{(r)})=O(n^{r-\frac{1}{t}})$.

\begin{proof}[Proof of Theorem \ref{thm:CTUB}]
 Let $A_{1},A_{2},\ldots,A_{r-1},B$ be the $r$ parts of size $n$ of an $r$-partite $K_{s,t}^{(r)}$-free $r$-graph $\mathcal{H}$. Suppose that $\mathcal{H}$ has more than $c_{s,t}'n^{r-\frac{1}{t}}$ edges, where $c_{s,t}'$ is defined as the smallest integer for which every bipartite graph with parts $X$ and $Y$ of size $n$ having more than $c_{s,t}'n^{2-\frac{1}{t}}$ edges contains a $K_{s,t}$ with $t$ vertices in $X$ and $s$ vertices in $Y$. Clearly $c_{s,t}'$ is independent of $n$ by the K\"{o}v\'{a}ri-S\'{o}s-Tur\'{a}n bound~\cite{Kovari1954}. Consider the complete $(r-1)$-partite $(r-1)$-uniform hypergraph $K_{n,n,\ldots,n}^{(r-1)}$ on vertex set $A_{1}\times A_{2}\times\cdots\times A_{r-1}$. It was shown in~\cite{1975matching} that $K_{n,n,\ldots,n}^{(r-1)}$ has a perfect matching decomposition, hence we can partition the $(r-1)$-tuples of $A_{1}\times A_{2}\times\ldots\times A_{r-1}$ into $\frac{n^{r-1}}{n}=n^{r-2}$ matchings $M_{1},M_{2},\ldots,M_{n^{r-2}}$. Let $\mathcal{H}_{i}$ be the subhypergraph of $\mathcal{H}$ induced by those edges that contain some $(r-1)$-tuples of $M_{i}$. By the pigeonhole principle, there exists some $i$ such that $\mathcal{H}_{i}$ contains more than $c_{s,t}'n^{2-\frac{1}{t}}$ edges. Next we construct an auxiliary bipartite graph $G_{i}$ on vertex set $A_{r-1}\cup B$, with edge set $$\{(a_{r-1},b):\exists (a_{1},a_{2},\ldots,a_{r-2}),a_{i}\in A_{i}, (a_{1},a_{2},\ldots,a_{r-1},b)\in E(\mathcal{H}_{i})\}.$$ Then by the choice of $c'_{s,t}$, we conclude that $G_{i}$ contains a copy of $K_{s,t}$ with $s$ vertices in $B$ and $t$ vertices in $A_{r-1}$, which extends via $M_{i}$ to a $K_{s,t}^{(r)}$ in $\mathcal{H}$. The proof is finished.
\end{proof}

\subsection{Upper bound for $\text{ex}_{r}(n,K_{a,b}^{(r)},K_{s,t}^{(r)})$}
We now complete the proof of Theorem \ref{thm:generalKST} by proving the following lemma. The main idea of this proof is based on the ideas in~\cite{Alon2016}
\begin{lemma}
  If $b<t$, then we have
  \begin{equation*}
    \textup{ex}_{r}(n,K_{1,b}^{(r)},K_{s,t}^{(r)})=O(n^{b(r-1)-\frac{b}{t}+1}).
  \end{equation*}
\end{lemma}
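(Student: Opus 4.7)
The plan is to reduce this hypergraph generalized Tur\'{a}n problem to a bipartite one via a random perfect matching of $(r-1)$-tuples, and then invoke a K\"{o}v\'{a}ri--S\'{o}s--Tur\'{a}n/Alon--Shikhelman style double-counting argument. First, by an averaging argument in the spirit of Lemma \ref{lemma:r!/r} applied to a uniformly random $r$-coloring of $V(\mathcal{H})$, it suffices to bound the number of copies of $K_{a,b}^{(r)}$ in an $r$-partite, $K_{s,t}^{(r)}$-free host $\mathcal{H}$ with parts $A_{1},\ldots,A_{r-1},B$ of size $\Theta(n)$, and to count only copies in the canonical orientation in which the $a$-part lies in $B$ and each of the $r-1$ remaining $b$-parts lies in one of the $A_{i}$'s; this costs only a multiplicative constant depending on $r,a,b$.

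Next, let $M\subseteq A_{1}\times\cdots\times A_{r-1}$ be a uniformly random transversal perfect matching, that is, a family of $|A_{1}|$ pairwise disjoint $(r-1)$-tuples covering each vertex of each $A_{i}$ exactly once; there are $(|A_{1}|!)^{r-2}$ such matchings. Form the auxiliary bipartite graph $G_{M}$ with parts $B$ and $M$, placing an edge between $y\in B$ and $\vec{a}\in M$ iff $\vec{a}\cup\{y\}\in E(\mathcal{H})$. Because the tuples in $M$ are pairwise disjoint, every $K_{s,t}$ in $G_{M}$ with $s$ vertices in $B$ and $t$ vertices in $M$ lifts to a genuine $K_{s,t}^{(r)}$ in $\mathcal{H}$, so the hypothesis forces $G_{M}$ to be $K_{s,t}$-free in this orientation. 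Writing $N_{M}$ for the number of copies of $K_{a,b}$ in $G_{M}$ with $a$ on the $B$-side and $b$ on the $M$-side, the K\"{o}v\'{a}ri--S\'{o}s--Tur\'{a}n argument yields the codegree bound $|N_{G_{M}}(R)\cap B|\leq s-1$ for every $R\in\binom{M}{t}$; double counting then gives $\sum_{T\in\binom{B}{a}}\binom{|N_{G_{M}}(T)\cap M|}{t}=O(n^{t})$, which upgrades via a standard split at $|N|=2t$ to $\sum_{T}|N_{G_{M}}(T)\cap M|^{t}=O(n^{t})$ in the natural range $a\leq t$, and H\"{o}lder's inequality with exponents $t/b$ and $t/(t-b)$ then yields $N_{M}=O(n^{a+b-ab/t})$.

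Finally, average over $M$. For every fixed $b$-matching $\mathcal{M}$ of $(r-1)$-tuples in $A_{1}\times\cdots\times A_{r-1}$, the probability that the random $M$ contains $\mathcal{M}$ equals $\bigl((|A_{1}|-b)!/|A_{1}|!\bigr)^{r-2}=\Theta(n^{-b(r-2)})$. Writing $N$ for the count of oriented copies of $K_{a,b}^{(r)}$ in $\mathcal{H}$, this yields $\mathbb{E}_{M}[N_{M}]=\Theta(N\cdot n^{-b(r-2)})$, and combining with the deterministic bound $N_{M}=O(n^{a+b-ab/t})$ gives $N\leq \Theta(n^{b(r-2)})\cdot O(n^{a+b-ab/t})=O(n^{a+b(r-1)-ab/t})$, as required. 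The delicate point is that only the one-sided $K_{s,t}$-freeness of $G_{M}$ is available (the opposite orientation would correspond to a $K_{t,s}^{(r)}$ in $\mathcal{H}$, which is not forbidden for $r\geq 3$ and $s\neq t$), so the clean bipartite bound $O(n^{a+b-ab/t})$ only holds in the natural range $a\leq t$; extending to $a>t$ would need a separate ingredient, for instance reducing to $a=t$ via the containment $K_{a,b}^{(r)}\supset K_{t,b}^{(r)}$ together with an extension argument for the $a-t$ additional apex vertices.
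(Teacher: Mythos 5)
Your argument is correct in the range $a\leqslant t$, and it takes a genuinely different route from the paper's. The paper works directly inside the hypergraph: for each $a$-set $A$ it forms the family $N_{A}$ of $(r-1)$-tuples joined to every vertex of $A$, bounds the number of copies by $\sum_{A}\binom{|N_{A}|}{b}$, applies the power-mean inequality to pass to $\sum_{A}|N_{A}|^{t}$, and bounds that sum by double counting against $K_{s,t}^{(r)}$-freeness. You instead linearize the problem: pass to an $r$-partite host, draw a uniformly random transversal perfect matching $M$ of $(r-1)$-tuples, note that the auxiliary bipartite graph $G_{M}$ between $B$ and $M$ is one-sidedly $K_{s,t}$-free, run the K\"{o}v\'{a}ri--S\'{o}s--Tur\'{a}n/H\"{o}lder count there, and recover the hypergraph bound by averaging over $M$ (a fixed disjoint $b$-family of transversal tuples lies in $M$ with probability $\Theta(n^{-b(r-2)})$). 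The individual steps check out: the one-sided freeness, the codegree bound, the split at $2t$, the H\"{o}lder step and the averaging are all sound, and the reduction to the canonical orientation and to equal part sizes costs only constants. What your method buys is that disjointness of the tuples is built in from the start (members of $M$ are pairwise disjoint), so no degenerate configurations ever need to be discarded; the price is the extra bookkeeping of the random partition and random matching.

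The restriction $a\leqslant t$ that you flag is, however, not a defect of your method but of the statement itself: for $t<a<s$ the lemma is false. Let $\mathcal{G}$ consist of all $r$-sets meeting a fixed $(t-1)$-set $W$. Among any $t$ pairwise disjoint $(r-1)$-tuples one is disjoint from $W$, which forces every apex vertex of a would-be $K_{s,t}^{(r)}$ into $W$, so $\mathcal{G}$ is $K_{s,t}^{(r)}$-free; yet choosing $b\leqslant t-1$ disjoint tuples through distinct vertices of $W$ and $a$ free apex vertices produces $\Theta(n^{a+b(r-2)})$ copies of $K_{a,b}^{(r)}$, and $a+b(r-2)>a+b(r-1)-\frac{ab}{t}$ exactly when $a>t$ (for $b=1$ the star through a single vertex already suffices). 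Correspondingly, the paper's own proof tacitly needs $a\leqslant t$: the estimate $\sum_{A}|N_{A}|^{t}\leqslant(1+o(1))\binom{s-1}{a}(t(r-1))!\,n^{t(r-1)}$ only accounts for selections of $t$ pairwise \emph{disjoint} tuples, and in the star example one has $\sum_{A}|N_{A}|^{t}=\Theta(n^{a+t(r-2)})$, which exceeds $n^{t(r-1)}$ once $a>t$. So your suggested repair for $a>t$ (reducing to $a=t$ and extending apexes) cannot succeed; the correct move is to add the hypothesis $a\leqslant t$ to the lemma (and to Theorem \ref{thm:generalKST}), in which range your proof is complete.
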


\begin{proof}
Let $\mathcal{G}$ be a $K_{s,t}^{(r)}$-free $r$-graph with $n$ vertices. For each vertex $v\in\mathcal{G}$, let $N(v)$ be the following set
\begin{equation*}
  N(v)=\{(b_{1},b_{2},\ldots,b_{r-1})| b_{i}\in V(\mathcal{G}), (v,b_{1},b_{2},\ldots,b_{r-1})\in E(\mathcal{G})\}.
\end{equation*}
It is easy to see the number of $K_{1,b}^{(r)}$ in $\mathcal{G}$ is at most
\begin{align*}
  \sum\limits_{v\in V(\mathcal{G})}\binom{|N(v)|}{b}&\leqslant \frac{1}{b!}\sum\limits_{v\in V(\mathcal{G})}|N(v)|^{b}\\
  &\leqslant \frac{1}{b!}n^{1-\frac{b}{t}}(\sum\limits_{v\in V(\mathcal{G})}|N(v)|^{t})^{\frac{b}{t}}\\
  &\leqslant (1+o(1))\frac{n^{1-\frac{b}{t}}}{b!}\bigg(\big((s-1)(t(r-1))!+(t(r-1)-1)!\big)n^{t(r-1)}\bigg)^{\frac{b}{t}}\\
  &=O(n^{b(r-1)-\frac{b}{t}+1}).
\end{align*}
We need some basic facts in the above estimation. The first is that for any $0<p \leqslant q,$ $\sum\limits_{i=1}^{m}x_{i}^{p}\leqslant m^{1-\frac{p}{q}}(\sum\limits_{i=1}^{m}x_{i}^{q})^{\frac{p}{q}}.$ Moreover, we estimate $\sum\limits_{A}\binom{|N(v)|}{t}$ via double counting. That is, we take advantage of the formulation $\sum\limits_{A}\binom{|N(v)|}{t}=\sum\limits_{T\in\mathcal{T}_{1}}|N(T)|+\sum\limits_{T\in\mathcal{T}_{2}}|N(T)|$, where $\mathcal{T}_{1}$ consists of all $t$ vertex disjoint $(r-1)$-tuples and $\mathcal{T}_{2}$ consists of the other $t$ $(r-1)$-tuples. Moreover, $N(T)$ consists of the vertices which are adjacent to every $(r-1)$-tuple in $T$. Consider the first part $\sum\limits_{T\in\mathcal{T}_{1}}|N(T)|$, for every $T\in\mathcal{T}_{1}$, if there are more than $(s-1)$ vertices in $N(T)$, then we can obtain a copy of $K_{s,t}^{(r)}$, which is a contradiction. For the second part $\sum\limits_{T\in\mathcal{T}_{2}}|N(T)|$, note that $|N(T)|<n$ and the number of vertices in $T$ is less than $t(r-1)$, thus we have $\sum\limits_{T\in\mathcal{T}_{2}}|N(T)|<(1+o(1))(t(r-1)-1)!n^{t(r-1)}$. The proof is finished.
\end{proof}

\section{$\text{ex}_{3}(n,K_{2,2,7}^{(3)})$}\label{sec:degenerate}
Let $\mathcal{H}$ be an $r$-graph with $v$ vertices and $e>0$ edges. An application of the probabilistic method shows that $\text{ex}(n,\mathcal{H})>cn^{\alpha}$, where $\alpha=r-\frac{v-r}{e-1}$ and $c$ is independent of $n$ \cite{Brown1973}. This yields $\text{ex}(n,K_{2,2,7}^{(3)})=\Omega(n^{\frac{73}{27}})$. In this section, we improve the exponent $\frac{73}{27}$ to $\frac{19}{7}$ by proving Theorem~\ref{thm:r=3}. Our construction is a variation of norm hypergraphs, thus the construction is explicit.

  Let $\mathbb{F}_{q}$ be a finite field, and $\mathbb{F}_{q^{r}}$ be a finite field extension of $\mathbb{F}_{q}$, the \emph{norm} $\textup{Norm}_{r}(x)$ of $x\in\mathbb{F}_{q^{r}}$ over $\mathbb{F}_{q}$ is defined by
  \[\textup{Norm}_{r}(x)=\prod_{i=0}^{r-1}x^{q^{i}}.\]
  Then $\textup{Norm}_{r}(x)\in\mathbb{F}_{q}$. The following result can be found in \cite{KRS96}.
\begin{lemma}(\cite{KRS96})\label{lem}
If $(D_{1},d_{1}),\dots,(D_{s},d_{s})$ are distinct elements of $\mathbb{F}_{q^{s-1}}\times\mathbb{F}_{q}^{*}$, then the system of $s$ equations
\[\textup{Norm}_{s-1}(D_{i}+X)=d_{i}x,\ 1\leqslant i\leqslant s\]
has at most $(s-1)!$ solutions $(X,x)\in\mathbb{F}_{q^{s-1}}\times\mathbb{F}_{q}^{*}$.
\end{lemma}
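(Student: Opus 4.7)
The plan is to recast the system as $s$ polynomial equations in $s$ unknowns with a sparse multilinear structure, and then invoke a Bezout-type mixed-volume bound.

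First I would note that in any solution the $D_i$ are forced to be pairwise distinct: if $D_i=D_j$ with $i\neq j$, then subtracting the two equations gives $(d_i-d_j)x=0$, and since $x\in\mathbb{F}_q^{*}$ we would need $d_i=d_j$, contradicting the hypothesis that the pairs $(D_i,d_i)$ are distinct. Using the Frobenius factorisation $\textup{Norm}(D_i+X)=\prod_{j=0}^{s-2}(D_i^{q^j}+X^{q^j})$, I would introduce formal variables $Y_j:=X^{q^j}$ and rewrite the system as
\[
 g_i(Y_0,\ldots,Y_{s-2},x):=\prod_{j=0}^{s-2}\!\left(D_i^{q^j}+Y_j\right)-d_ix=0,\qquad i=1,\ldots,s,
\]
which is $s$ polynomials in the $s$ variables $(Y_0,\ldots,Y_{s-2},x)$. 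Each $g_i$ is multilinear (degree at most one in every variable), and the only monomial involving $x$ is the term $-d_ix$; hence the Newton polytope $\Delta$ of each $g_i$ is the pyramid in $\mathbb{R}^s$ with base $[0,1]^{s-1}\times\{0\}$ and apex $(0,\ldots,0,1)$.

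The core step is then the Bernstein--Kushnirenko theorem. A direct computation gives $\textup{Vol}(\Delta)=1/s$ (a pyramid with unit-cube base and unit height), so the mixed volume of $s$ copies of $\Delta$ equals $s!\cdot\textup{Vol}(\Delta)=(s-1)!$. This bounds the number of common zeros of $g_1,\ldots,g_s$ in the algebraic torus $(\overline{\mathbb{F}_q}^{\,*})^s$. Since distinct $(X,x)\in\mathbb{F}_{q^{s-1}}^{\,*}\times\mathbb{F}_q^{\,*}$ produce distinct tuples $(X,X^q,\ldots,X^{q^{s-2}},x)$ in the torus, this yields the claimed $(s-1)!$ bound for solutions with $X\neq 0$.

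The hardest part to execute carefully is the boundary case $X=0$, which lies outside the torus and so is not counted directly by BKK. In this case the equations collapse to $\textup{Norm}(D_i)=d_ix$, which pins $x$ down uniquely (when consistent across $i$), contributing at most one additional solution. To absorb this into the bound of $(s-1)!$, I would upgrade BKK to a Bezout-type intersection on the toric variety $X_\Delta$ associated to $\Delta$: the intersection of the $s$ divisors $\{g_i=0\}$ on $X_\Delta$ has degree exactly $(s-1)!$ and captures both torus and boundary zeros. Each honest solution $(X,x)$ contributes a distinct intersection point, yielding the final bound $\#\{\text{solutions}\}\leq(s-1)!$.
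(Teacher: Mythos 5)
Your mixed-volume computation is correct as far as it goes: the relaxed system $g_i(Y_0,\ldots,Y_{s-2},x)=\prod_{j}(D_i^{q^j}+Y_j)-d_ix$ does have Newton polytope the pyramid $\Delta$ with $\textup{Vol}(\Delta)=1/s$, and $s!\cdot\textup{Vol}(\Delta)=(s-1)!$ is the right number (this relaxation of Frobenius powers into independent variables is also the first move in the source proof of Koll\'ar--R\'onyai--Szab\'o and Alon--R\'onyai--Szab\'o, which the paper cites rather than reproves; there one first divides the $i$-th equation by the $s$-th to eliminate $x$, substitutes $Z=1/(D_s+X)$ to get $s-1$ equations $\textup{Norm}(A_i+Z)=b_i$ with distinct $A_i$, and then applies a multilinear B\'ezout bound of $(s-1)!$). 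But there is a genuine gap at the decisive step: the BKK theorem bounds only the number of \emph{isolated} torus solutions, i.e.\ it requires the relaxed system to have a zero-dimensional solution set (or at least that every honest solution be an isolated point of it). You never verify this. After forgetting the relations $Y_j=X^{q^j}$, the variety $\{g_1=\cdots=g_s=0\}$ could a priori contain positive-dimensional components passing through the points $(X,X^q,\ldots,X^{q^{s-2}},x)$ coming from actual solutions, and then BKK gives no bound at all on their number. This finiteness statement is precisely where the distinctness of the $D_i$ must do real work, and it is the technical heart of the KRS lemma (proved there as a separate, nontrivial lemma). Your proposal uses distinctness only to rule out repeated $D_i$, which is not enough: nothing in your BKK argument would change if the coefficients were chosen so that the relaxed variety is a curve. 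Nor can you repair this by intersecting with the Frobenius relations $Y_j=Y_0^{q^j}$, since those hypersurfaces have degree $q^j$ and any B\'ezout-type bound would then grow with $q$.

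The same unverified properness undermines your patch for the boundary case. The claim that the intersection of the divisors $\{g_i=0\}$ on the toric variety $X_\Delta$ ``has degree exactly $(s-1)!$ and captures both torus and boundary zeros'' is not a theorem: intersection numbers bound the length of the intersection only when it is proper (zero-dimensional), and the closures of the $g_i$ can meet boundary strata in positive-dimensional sets, so you are again assuming exactly what needs to be proved. (Incidentally, $X=0$ is the easy part: it contributes at most one solution, with $x$ pinned down by any single equation, and the classical proof sidesteps it entirely via the substitution $Z=1/(D_s+X)$, legitimate because $\textup{Norm}(D_i+X)=d_ix\neq 0$ forces $X\neq -D_i$ for every $i$.) A minor further point: if some $D_i=0$, the product in $g_i$ collapses to the single monomial $Y_0\cdots Y_{s-2}$ and the Newton polytope degenerates to a segment; monotonicity of mixed volumes saves the bound, but your ``each $g_i$ has polytope $\Delta$'' is not literally true then. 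To make your route rigorous you would need to prove the finiteness lemma for the relaxed system with distinct $D_i$ --- at which point you have essentially rebuilt the KRS argument, with BKK replacing their multilinear B\'ezout lemma.
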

We also need the following lemma.
\begin{lemma}\label{lem4}
Let $m$ be a sufficiently large integer, $k=\lfloor\sqrt{m}\rfloor-1$ and $\ell=\lfloor\frac{k}{2}\rfloor$. Let
\begin{align*}
&S_{1}=\{0,1,2,\dots,\ell-1\},\\
 &S_{2}=\{0,k,2k,\dots,(\ell-1)k\},\\
  &S_{3}=\{0,k+1,2(k+1),\dots,(\ell-1)(k+1)\}
\end{align*}
 be additive sets in $\mathbb{Z}_{m}$. Then $|S_{i}+S_{j}|=|S_{i}||S_{j}|=\ell^{2}$ for $1\leqslant i\neq j\leqslant 3$.
\end{lemma}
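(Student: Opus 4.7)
\medskip

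\noindent\textbf{Proof proposal for Lemma 4.4.} Since the inequality $|S_{i}+S_{j}|\leqslant|S_{i}||S_{j}|=l^{2}$ is automatic, the content of the lemma is that every sum has a unique representation; equivalently, the $l^{2}$ formal sums are pairwise distinct as elements of $\mathbb{Z}_{m}$. The plan is to first lift everything into $\mathbb{Z}$ by showing that each sum, as a nonnegative integer, lies in $[0,m)$, so that two sums are congruent mod $m$ if and only if they are equal in $\mathbb{Z}$. Then uniqueness of representation becomes an elementary base-$k$-type argument.

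For the integer bounds, note that $k=\lfloor\sqrt{m}\rfloor-1$ and $l\leqslant k/2$, so $k(k+1)\leqslant(\sqrt{m}-1)\sqrt{m}<m$ and similarly $k(k+2)<m$. Using these I would verify:
\begin{align*}
\max(S_{1}+S_{2}) &= (l-1)+(l-1)k=(l-1)(k+1)\leqslant \tfrac{k(k+1)}{2}<\tfrac{m}{2},\\
\max(S_{1}+S_{3}) &= (l-1)(k+2)\leqslant \tfrac{k(k+2)}{2}<\tfrac{m}{2},\\
\max(S_{2}+S_{3}) &= (l-1)k+(l-1)(k+1)=(l-1)(2k+1)\leqslant \tfrac{k(2k+1)}{2}<m,
\end{align*}
where the last bound again follows from $k^{2}\leqslant m-2\sqrt{m}+1$ once $m$ is large enough. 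Hence every sum is a genuine integer in $[0,m)$ and collisions modulo $m$ are the same as equalities in $\mathbb{Z}$.

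The second step is to rule out nontrivial integer equalities. For $S_{1}+S_{2}$, suppose $a_{1}+b_{1}k=a_{2}+b_{2}k$ with $a_{i},b_{i}\in\{0,\dots,l-1\}$; then $a_{1}-a_{2}=(b_{2}-b_{1})k$, and since $|a_{1}-a_{2}|\leqslant l-1<k$, the right-hand side being a multiple of $k$ forces $b_{1}=b_{2}$ and then $a_{1}=a_{2}$. The case $S_{1}+S_{3}$ is identical with $k$ replaced by $k+1$. For $S_{2}+S_{3}$, an equality $b_{1}k+c_{1}(k+1)=b_{2}k+c_{2}(k+1)$ rearranges to $k(b_{1}-b_{2}+c_{1}-c_{2})=c_{2}-c_{1}$; since $|c_{2}-c_{1}|\leqslant l-1<k$, the right side vanishes, giving $c_{1}=c_{2}$ and then $b_{1}=b_{2}$.

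There is no real obstacle here: the only thing to watch is that the inequality $l\leqslant k/2$ must be used simultaneously with $k<\sqrt{m}$ to keep the maximum sum below $m$, and the gap $|a_{1}-a_{2}|<k$ (respectively $<k+1$) is what makes the uniqueness arguments work. Both margins are preserved by the choices $k=\lfloor\sqrt{m}\rfloor-1$ and $l=\lfloor k/2\rfloor$ once $m$ is large, which is the hypothesis of the lemma.
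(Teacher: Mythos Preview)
Your proof is correct and follows essentially the same approach as the paper: reduce to an integer uniqueness question and argue via a base-$k$-style digit argument. The paper is considerably terser---it declares $|S_1+S_2|=l^2$ ``easy to see'' and handles $S_1+S_3$ by the rewriting $i+j(k+1)=(i+j)+jk$ with $i+j<k$---and in particular it does not explicitly verify that all sums lie in $[0,m)$, which you do carefully; your version is in that sense cleaner but not materially different.
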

\begin{proof}
It is easy to see that $|S_{1}+S_{2}|=\ell^{2}$.

For any $x\in \mathbb{Z}_{m}$, if $x=i+j(k+1)$, where $0\leqslant i,j\leqslant \ell-1$. Then $x=(i+j)+jk$, there is at most one solution for $i,j$. Hence $|S_{1}+S_{3}|=\ell^{2}$. Similarly, we have $|S_{2}+S_{3}|=\ell^{2}$.
\end{proof}

\begin{proof}[Proof of Theorem~\ref{thm:r=3}]
Let $q$ be an odd prime power, $k=\lfloor\sqrt{q-1}\rfloor-1$ and $\ell=\lfloor\frac{k}{2}\rfloor$. Let \begin{align*}
&S_{1}=\{0,1,2,\dots,\ell-1\},\\
 &S_{2}=\{0,k,2k,\dots,(\ell-1)k\},\\
  &S_{3}=\{0,k+1,2(k+1),\dots,(\ell-1)(k+1)\}
\end{align*} be additive sets in $\mathbb{Z}_{m}$. By Lemma~\ref{lem4}, $|S_{i}+S_{j}|=|S_{i}||S_{j}|=\ell^{2}$ for $1\leqslant i\neq j\leqslant 3$.

Let $g$ be a primitive element of $\mathbb{F}_{q}$, and $B_{i}=\{g^{j}: j\in S_{i}\}$ for $1\leqslant i\leqslant 3$. Let $\mathcal{G}$ be a $3$-graph with parts $A_{i}=\mathbb{F}_{q^{3}}\times B_{i}$, $i=1,2,3$. The vertices $(D_{i},d_{i})\in A_{i}$, $i=1,2,3$ form an edge if $\text{Norm}_{3}(D_{1}+D_{2}+D_{3})=d_{1}d_{2}d_{3}$.

Clearly $\mathcal{G}$ has $n:=3\ell q^{3}=\frac{3}{2}q^{\frac{7}{2}}+o(q^{\frac{7}{2}})$ vertices, and it is easy to count that there are $\frac{(\ell q^{3})^{3}}{q}\geqslant \frac{1}{27}n^{\frac{19}{7}}+o(n^{\frac{19}{7}})$ edges.

We claim that $\mathcal{G}$ is $K_{2,2,7}^{(3)}$-free. Assume to the contrary, there exists a copy of $K_{2,2,7}^{(3)}$ in $\mathcal{G}$. Without loss of generality, suppose that $(D_{i},d_{i})\in A_{1}$, $(E_{j},e_{j})\in A_{2}$, $(X_{k},x_{k})\in A_{3}$, $i,j\in [2]$, $k\in [7]$ form a copy of $K_{2,2,7}^{(3)}$. Let $T_{ij}=D_{i}+E_{j}$ and $t_{ij}=d_{i}e_{j}$. Then we have
\[\text{Norm}_{3}(T_{ij}+X_{k})=t_{ij}x_{k}\]
for $i,j\in[2]$ and $k\in[7]$. This also implies that the system of equations
\[\text{Norm}_{3}(T_{ij}+X)=t_{ij}x\]
for $i,j\in[2]$ has at least $7$ solutions for $(X,x)$.

By the definition of $B_{i}$, we have $|\{t_{ij}: i,j\in[2]\}|=4$. Hence $(T_{ij},t_{ij})$, $i,j\in[2]$ are distinct elements. By Lemma~\ref{lem}, there are at most $6$ solutions for such a system of equations, which is a contradiction. Thus, $\mathcal{G}$ is $K_{2,2,7}^{(3)}$-free.
\end{proof}

\begin{remark}
We believe that the exponent $\frac{19}{7}$ can be improved, hence we made no attempt to optimize the leading coefficient $\frac{1}{27}$ in the proof above.
\end{remark}
\section{Concluding remarks}\label{sec:remarks}
In this paper, we have studied two extensions of hypergraph Tur\'{a}n problems of complete bipartite graphs. The first object is the complete bipartite $r$-graph. The authors in \cite{Mubayi2004} introduced this structure and gave some general bounds and constructions for $\text{ex}_{r}(n,K_{s,t}^{(r)})$. They also presented a conjecture for $3$-graphs. Here we generalize their conjecture for $r\geqslant 3.$
\begin{conjecture}\label{conj:mubayi}
  Let $s,t$ be integers with $2\leqslant s\leqslant t,$ then
  \begin{equation*}
    \textup{ex}_{r}(n,K_{s,t}^{(r)})=\Theta(n^{r-\frac{2}{s}}).
  \end{equation*}
\end{conjecture}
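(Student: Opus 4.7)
The conjecture has two halves. On the lower-bound side, Mubayi--Verstra\"ete already supply $\Omega(n^{r-2/s})$ for $r=3$ and $t > (s-1)!$ via a norm graph, so the natural plan is to extend their construction both to $r \geqslant 4$ and to the full range $t \geqslant s$. One route is to combine the $r$-partite norm setup used in Section~\ref{sec:degenerate} (whose $K_{s,t}^{(r)}$-freeness is controlled by Lemma~\ref{lem}) with the random algebraic deletion argument of Theorem~\ref{thm:GTLB} to absorb the regime $s \leqslant t \leqslant (s-1)!$, where naive norm counting over the relevant field does not suffice.

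The upper-bound direction is the harder half. The starting point is a common-link double count. For each $s$-subset $S \subseteq V(G)$, let $L_S$ be the $(r-1)$-graph whose edges are the $(r-1)$-tuples $T$ with $T \cup \{v\} \in E(G)$ for every $v \in S$. If $G$ is $K_{s,t}^{(r)}$-free, then any matching in $L_S$ of size $\geqslant t+s$ must contain $t$ members disjoint from $S$, yielding a forbidden copy; hence the matching number of $L_S$ is $O(t)$, and Erd\H{o}s's matching bound gives $|L_S| = O(n^{r-2})$. Expanding $\sum_S |L_S| = \sum_T \binom{d(T)}{s}$ over $(r-1)$-tuples $T$ and applying convexity to the right-hand side in terms of the average codegree $\bar D = r\,e(G)/\binom{n}{r-1}$ yields only $e(G) = O(n^{r-1/s})$, precisely the K\"{o}v\'{a}ri--S\'{o}s--Tur\'{a}n-type bound already known from Mubayi--Verstra\"ete.

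The main obstacle is that the single-link estimate $|L_S| = O(n^{r-2})$ is tight (witness: a near-star $(r-1)$-graph), so the improvement cannot come from sharpening it. Two avenues seem plausible. First, a matching-free $(r-1)$-graph has a vertex cover of size $O(t)$, so each $L_S$ is concentrated on a small skeleton, and one may ask how many $s$-subsets $S$ can share a common skeleton before a $K_{s,t}^{(r)}$ is forced --- a refined Zarankiewicz-type question on incidences between $s$-sets and $O(t)$-subsets. Second, a dependent random choice or second-moment argument might show that the codegrees $d(T)$ concentrate far more strongly than convexity alone suggests, converting the $n^{1/s}$ loss into the $n^{2/s}$ loss demanded by the conjecture. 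I expect the real difficulty to lie in proving either of these structural inputs sharply, since both would require new extremal machinery beyond the standard toolbox.
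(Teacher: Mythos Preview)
The statement you are attempting is Conjecture~\ref{conj:mubayi}, and the paper offers no proof of it: it is presented in the concluding remarks explicitly as an open problem, with the authors writing that they ``still can not verify this conjecture.'' There is therefore no paper-side argument to compare against.

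Your proposal is not a proof either, and to your credit you do not pretend otherwise. But it is worth being precise about where the genuine gaps lie. On the lower-bound side, the Mubayi--Verstra\"ete norm construction gives $\Omega(n^{3-2/s})$ only for $r=3$ and $t>(s-1)!$; the intermediate range $s\leqslant t\leqslant (s-1)!$ and the case $r\geqslant 4$ are open, and your suggestion to ``combine the $r$-partite norm setup with the random algebraic deletion argument of Theorem~\ref{thm:GTLB}'' does not work as stated: the random algebraic method in this paper is tuned to the regime $s\gg t$ and produces the exponent $r-1/t$, not $r-2/s$, so a direct transplant will not yield the desired exponent.

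On the upper-bound side, your double count is correct but, as you observe, only recovers the known $O(n^{r-1/s})$ bound. The two avenues you sketch (controlling how many $s$-sets share a small vertex cover in their common link, or a dependent-random-choice concentration of codegrees) are reasonable heuristics, but neither is carried out, and you yourself conclude that they ``would require new extremal machinery beyond the standard toolbox.'' That is an accurate assessment of the state of the problem, not a proof. In short: the conjecture remains open, the paper does not claim otherwise, and your write-up is a research plan rather than an argument that closes either direction.
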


Though we still can not verify this conjecture, there is some evidence that supports this conjecture. For example, Ergemlidze, Jiang and Methuku~\cite{JiangJCTA2020} showed that $\text{ex}_{4}(n,K_{2,t}^{(4)})\geqslant (1+o(1))\frac{t-1}{8}n^{3}.$

Moreover, in~\cite{Mubayi2004} the authors remarked that their results can apply for both $t\geqslant s$ and $s>t,$ hence for simplicity they let $t\geqslant s.$  However, when $s$ is sufficiently larger than $t$, to our surprise, we obtain the matched lower bounds for $\text{ex}_{r}(n,K_{s,t}^{(r)})=\Omega(n^{r-\frac{1}{t}})$ via the random algebraic construction.

 We also obtain the lower bounds for generalized \turan number $\text{ex}_{r}(n,\mathcal{T},K_{s,t}^{(r)}),$ and we show the matched upper bounds when $\mathcal{T}$ is an edge or a complete bipartite $r$-graph $K_{1,b}^{(r)}$ with $b<t$. It will interesting to find more examples reaching the lower bounds.

\section*{Acknowledgements}
The authors would like to thank Dr.~Jie Ma and Dr.~Chong Shangguan for their helpful comments and express their gratitude to the anonymous
reviewers for their detailed and constructive comments which are
very helpful to the improvement of the technical presentation of
this paper.

\end{document}